\newtheorem{thm}{Theorem}[section] 
\newtheorem{theo}[thm]{Theorem}
\newtheorem{prop}[thm]{Proposition}
\newtheorem{lem}[thm]{Lemma}
\theoremstyle{definition} 
\newtheorem{defn}[thm]{Definition}
\theoremstyle{remark}
\newtheorem{rem}[thm]{Remark}
\newtheorem{claim}[thm]{Claim}
\numberwithin{equation}{section}
\newcommand{\rk}[0]{\operatorname{rk}}
\newcommand{\supp}[0]{\operatorname{Supp}}
\newcommand{\codim}[0]{\operatorname{codim}}
\newcommand{\Sym}{{\rm Sym}}
\newcommand{\End}[0]{\operatorname{End}}
\newcommand{\Tor}{{\rm Tor}}
\newcommand{\reg}{{\rm{reg}}}
\newcommand{\underalign}[2]{\quad \underset{\mathclap{\strut #1}}{#2}\quad}
\newcommand{\tor}{{\rm Tor}}
\newcommand{\Sing}{{\rm Sing}}
\newcommand{\C}{\mathbb{C}}
\newcommand{\Q}{\mathbb{Q}}
\newcommand{\Amp}{{\rm{Amp}}}
\title[HYM metrics on klt KE varieties and MY equality for big $-K_X$, K-stable varieties]{Admissible HYM metrics on klt KE varieties and the MY equality for big anticanonical K-stable varieties}
\author{Satoshi Jinnouchi}
\address{Department of Mathematics, Graduate School of Science, The University of Osaka,
1-1, Machikaneyama-cho, Toyonaka, Osaka 560-0043, Japan.}
\email{{\tt u122988d@ecs.osaka-u.ac.jp}}
\email{{\tt 20160312sti@gmail.com}}
\date{\today}
\subjclass[2020]{Primary 32J25, Secondary 32Q15, 14C30, 14E30}
\keywords{Miyaoka-Yau inequality, Bogomolov-Gieseker inequality, klt K\"ahler variety, HYM metric.}
\begin{document}

\begin{abstract}
This short note includes three results: $(1)$ If a reflexive sheaf $\mathcal{E}$ on a log terminal K\"{a}hler-Einstein variety $(X,\omega)$ is slope stable with respect to a singular K\"{a}hler-Einstein metric $\omega$, then $\mathcal{E}$ admits an $\omega$-admissible Hermitian-Yang-Mills metric. $(2)$ If a K-stable log terminal projective variety with big anti-canonical divisor satisfies the equality of the Miyaoka-Yau inequality in the sense of \cite{IJZ25}, then its anti-canonical model admits a quasi-\'{e}tale cover from $\mathbb{C}P^n$. $(3)$ There exists a holomorphic rank 3 vector bundle on a compact complex surface which is semistable for some nef and big line bundle, but it is not semistable for any ample line bundles.
\end{abstract}
\maketitle
\tableofcontents
\section{Introduction}
This short note includes three results.
The first one is a generalization of \cite{Jin25-2}.
\begin{thm}[=Theorem \ref{lt HE}]\label{lt HE intro}
    Let $X$ be a compact normal analytic variety with klt singularities. Suppose that $X$ admits a singular K\"{a}hler-Einstein metric $\omega$. Then, if a reflexive sheaf $\mathcal{E}$ on $X$ is $\{\omega\}^{n-1}$-slope stable, then $\mathcal{E}$ admits an $\omega$-admissible Hermitian-Yang-Mills metric.
\end{thm}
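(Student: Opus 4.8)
The plan is to reduce the statement to solving the Hermitian--Yang--Mills equation on the good locus and then to invoke the reflexivity of $\mathcal{E}$. Let $X_{\reg}$ be the smooth locus of $X$ and let $Z\subset X$ be the closed analytic set where $X$ is singular or $\mathcal{E}$ fails to be locally free; since $X$ is normal (hence smooth in codimension one) and $\mathcal{E}$ is reflexive (hence locally free in codimension one), we have $\codim_X Z\ge 2$. Write $X^\circ = X\setminus Z$, $j\colon X^\circ\hookrightarrow X$, and $E=\mathcal{E}|_{X^\circ}$, a holomorphic vector bundle on which $\omega$ restricts to a genuine smooth Kähler metric. Because $E$ is locally free and $\codim_X Z\ge 2$, reflexivity gives $\mathcal{E}=j_*E$, so $\mathcal{E}$ is determined by $E$. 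By definition, an $\omega$-admissible HYM metric on $\mathcal{E}$ is exactly a smooth Hermitian metric $h$ on $E$ satisfying
\[
\sqrt{-1}\,\Lambda_\omega F_h=\gamma\,\mathrm{Id}_E,\qquad |F_h|_\omega\in L^2(X^\circ,\omega^n),\quad \Lambda_\omega F_h\in L^\infty,
\]
where $\gamma$ is the constant fixed by the $\{\omega\}^{n-1}$-slope $\mu_\omega(\mathcal{E})=\deg_{\{\omega\}^{n-1}}(\mathcal{E})/\rk(\mathcal{E})$. Thus the whole problem is to produce such an $h$ on the (incomplete, non-compact) Kähler manifold $(X^\circ,\omega)$.

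The role of the Kähler--Einstein hypothesis is to supply the analytic control needed to run the Kobayashi--Hitchin machinery on $(X^\circ,\omega)$. Since $\Ric(\omega)=\lambda\,\omega$, the metric has bounded Ricci curvature, finite volume, and---by the structure theory of singular KE metrics on klt varieties (bounded local potentials and controlled model geometry near $Z$)---a Sobolev inequality and the integrability/decay properties isolated by Simpson and Bando--Siu for Hermitian--Einstein problems on open manifolds. I would first verify these analytic hypotheses for $(X^\circ,\omega,E)$. The existence mechanism is then Donaldson's heat flow
\[
h_t^{-1}\partial_t h_t=-\bigl(\sqrt{-1}\,\Lambda_\omega F_{h_t}-\gamma\,\mathrm{Id}_E\bigr),\qquad h_0=\text{fixed smooth background metric},
\]
run either directly on $X^\circ$ or, to keep everything smooth, on a resolution $\pi\colon\widetilde X\to X$ with regularized metrics $\omega_\epsilon=\pi^*\omega+\epsilon\,\widetilde\omega$, the approximation parameter being removed at the end. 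Long-time existence of the flow follows from the parabolic theory once the geometry is controlled as above.

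The crux is the uniform $C^0$-estimate on the endomorphism $s_t=h_0^{-1}h_t$ (equivalently, properness of the Donaldson functional), and this is exactly where $\{\omega\}^{n-1}$-slope stability is used. If the $C^0$-bound failed, a Uhlenbeck--Yau/Simpson limiting argument would produce a nontrivial weakly holomorphic subbundle of $E$ over $X^\circ$; by Bando--Siu regularity and extension this subbundle is the restriction of a coherent reflexive subsheaf $\mathcal{F}\subsetneq\mathcal{E}$. Computing the slope of $\mathcal{F}$ by the Chern--Weil integral against $\omega^{n-1}$---which converges because of the $L^2$-curvature bound and $\codim_X Z\ge 2$, and which agrees with the cohomological $\{\omega\}^{n-1}$-slope---would give $\mu_\omega(\mathcal{F})\ge\mu_\omega(\mathcal{E})$, contradicting stability. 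Hence the $C^0$-estimate holds; standard interior Schauder and $L^2$ bootstrapping then yield higher-order estimates and the $L^2$-curvature bound, and the flow converges as $t\to\infty$ (and as $\epsilon\to 0$) to a metric $h$ solving the HYM equation with $|F_h|_\omega\in L^2$, i.e.\ an $\omega$-admissible HYM metric.

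The main obstacle is the analysis across the singular/non-free set $Z$, and I expect this---rather than the now-classical flow analysis on a fixed compact manifold---to be the heart of the proof. Two points must be controlled uniformly near $Z$: first, since $\omega$ is only smooth on $X_{\reg}$ and its geometry degenerates toward $Z$, the Sobolev constant, the parabolic estimates, and the convergence of the Chern--Weil slope integrals have to be established with constants independent of the exhaustion/approximation; second, the destabilizing object extracted in the $C^0$-step lives a priori only on $X^\circ$, so promoting it to a reflexive subsheaf of $\mathcal{E}$ and matching its analytic slope with the cohomological $\{\omega\}^{n-1}$-slope requires the Bando--Siu extension across a codimension-$2$ set together with the insensitivity of $\{\omega\}^{n-1}$-degrees to codimension-$2$ modifications. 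It is precisely the klt condition (good local models, bounded potentials) together with the Einstein equation (bounded Ricci, Sobolev inequality) that make both uniform controls possible, which is why the hypotheses of the theorem are the natural ones.
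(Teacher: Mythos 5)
Your proposal runs the Kobayashi--Hitchin machinery (heat flow, Uhlenbeck--Yau/Simpson $C^0$-estimate via a destabilizing subsheaf, Bando--Siu extension) directly on the open manifold $(X^\circ,\omega)$, and the step on which everything hinges is the sentence ``I would first verify these analytic hypotheses for $(X^\circ,\omega,E)$,'' where the hypotheses are bounded/controlled geometry near $Z$, a Sobolev inequality, finite volume, and the Simpson--Bando--Siu integrability and exhaustion conditions. This is a genuine gap, not a routine verification: for a klt K\"ahler--Einstein current $\omega$ the behaviour of the metric near the higher-codimensional singular locus of $X$ is \emph{not known} (only the orbifold locus is understood), so there is no ``controlled model geometry near $Z$'' to appeal to, no known exhaustion function of $X^\circ$ with bounded Laplacian, and no direct Sobolev inequality for $(X^\circ,\omega)$ in the form Simpson's framework requires. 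The Einstein equation $\Ric(\omega)=\lambda\omega$ holds only as an equation of currents and does not give bounded geometry of $(X^\circ,\omega)$ near $Z$. The same unknown behaviour of $\omega$ near $Z$ blocks the second pillar of your argument: extending the weakly holomorphic destabilizing subbundle across the singular set of the \emph{variety} (Bando--Siu is stated for smooth ambient spaces, or at least for metrics with controlled models), and identifying the analytic Chern--Weil slope over $X^\circ$ with the cohomological $\{\omega\}^{n-1}$-slope defined via a resolution. So the proof cannot currently be completed along the route you propose.

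The paper is structured precisely to avoid these issues, and uses stability in a different (weaker) way than you do. It never works with the unknown geometry of $\omega$ near $Z$: it passes to a resolution $\pi\colon Y\to X$ with $E=\pi^*\mathcal{E}/\Tor$ locally free, takes a \emph{tame approximation} $\omega_i\in\pi^*\{\omega\}+\tfrac1i\{\omega_Y\}$ by genuine smooth K\"ahler metrics on the compact manifold $Y$ (Theorem \ref{sob ineq}~$(2)$, from \cite{CCHTST25}), and uses openness of stability so that $E$ is $\{\omega_i\}^{n-1}$-stable; the classical Uhlenbeck--Yau theorem then produces an $\omega_i$-HYM metric $h_E\Psi_i^2$ for each $i$. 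The uniform $C^0$-bound on the endomorphisms --- your ``crux'' --- is obtained not from a destabilizing-subsheaf contradiction but from the \emph{uniform} mean value inequality for the whole family $\{\omega_i\}$ (Theorem \ref{sob ineq}~$(4)$, resting on the uniform Sobolev inequality of \cite{GPSS23}), applied to the differential inequality $\Delta_{\omega_i}\bigl(\log|\Psi_i|^2+C\log|s_D|^2\bigr)\ge -C$, where the twist by a power of the defining section $s_D$ of an exceptional divisor absorbs the unboundedness of $\Lambda_{\omega_i}F_{h_E}$. An Uhlenbeck limit of the resulting connections then yields the $\pi^*\omega$-admissible HYM metric. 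In short: where you need hard unproven facts about $\omega$ near the singularities, the paper substitutes uniform estimates for smooth approximations on a resolution, which is exactly why its proof is advertised as independent of the existence of model metrics near $\Sing(X)$.
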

Second one is the structure theorem of a projective variety with big anti-canonical divisor:
\begin{thm}[=Theorem \ref{BG ineq}]\label{BG ineq intro}
Let $X$ be an $n$-dimensional projective klt variety smooth in codimension $2$. Assume that $-K_X$ is big and that $X$ is K-stable. If the equality
\begin{equation}
\label{eq-MY-equality intro}
\bigl(2(n+1)c_2(X) - nc_1(X)^2 \bigr) \cdot \langle c_1(-K_{X})^{n-2} \rangle = 0
\end{equation}
holds, then the anticanonical model $Z$ admits a quasi-étale cover from $\mathbb{C}\mathbb{P}^n$. $($In this case, by \cite{Xu23}, the anticanonical model $Z$ exists.$)$
\end{thm}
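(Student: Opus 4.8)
The plan is to transport the whole problem to the anticanonical model, where an honest ample polarization and a genuine Kähler–Einstein metric are available, and then to run a pointwise Chern–Weil rigidity argument followed by a Fano uniformization. First I would descend to $Z$. Since $X$ is K-stable with $-K_X$ big, \cite{Xu23} produces the anticanonical model $\mu\colon X \dashrightarrow Z$, where $Z$ is a klt \emph{Fano} variety (so $-K_Z$ is ample) sharing the anticanonical ring of $X$, and K-stability descends to $Z$. Because the positive product of a big class is computed on its ample model, I would use the compatibility (in the sense of \cite{IJZ25}) of $\langle c_1(-K_X)^{n-2}\rangle$ with the genuine intersection class $c_1(-K_Z)^{n-2}$ under $\mu$, together with the matching of the second-order characteristic numbers, to rewrite the hypothesis \eqref{eq-MY-equality intro} as the Miyaoka–Yau equality
\[
\bigl(2(n+1)c_2(Z) - n\,c_1(Z)^2\bigr)\cdot c_1(-K_Z)^{n-2} = 0
\]
taken against the ample class $-K_Z$.

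Next I would install the metric. By the solution of the Yau–Tian–Donaldson problem for klt Fano varieties, the K-polystable $Z$ carries a singular Kähler–Einstein metric $\omega$ with $\Ric(\omega)=\omega$. On $Z_{\reg}$ the Kähler–Einstein equation is exactly the Hermitian–Yang–Mills equation for the reflexive tangent sheaf $T_Z=(\Omega_Z^1)^{*}$, so $\omega$ endows $T_Z$ with an $\omega$-admissible HYM metric; K-polystability makes $T_Z$ $\{\omega\}^{n-1}$-slope polystable, which is consistent with the existence statement in Theorem~\ref{lt HE intro}. In particular the Bogomolov–Gieseker inequality holds, and the integrand representing the left-hand side of the displayed equality is pointwise nonnegative on $Z_{\reg}$.

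I would then extract pointwise rigidity and uniformize. Expressing the Miyaoka–Yau integrand through the HYM curvature $F$ of $\omega$, it is a sum of squares measuring the deviation of the full curvature tensor from the Fubini–Study model (constant positive holomorphic sectional curvature); vanishing of the integral forces this deviation to vanish pointwise, so $\omega$ has constant holomorphic sectional curvature on $Z_{\reg}$. By the Fano analogue of Yau's uniformization theorem in the klt setting, a Kähler–Einstein klt Fano whose tangent sheaf carries this model curvature is quasi-étale covered by $\mathbb{C}\mathbb{P}^n$. Here the hypothesis that $X$, and hence $Z$, is smooth in codimension $2$ is essential: it is what makes $c_2$ well defined through the reflexive tangent sheaf and keeps the singular locus small enough for the local developing map and the flat projective structure to extend across it, producing the asserted cover $\mathbb{C}\mathbb{P}^n \to Z$.

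The hard part will be the passage from the single cohomological identity to the pointwise curvature identity, and then the uniformization itself. I expect the main obstacle to be analytic control of the admissible HYM (= Kähler–Einstein) metric and its curvature near both $\Sing Z$ and the $\mu$-exceptional locus, where $\omega$ degenerates: one must justify that the Chern–Weil representative of \eqref{eq-MY-equality intro} is the genuine $L^1$ integrand of the model-deviation tensor, with no boundary contribution from these loci, and that constant holomorphic sectional curvature on $Z_{\reg}$ extends to a global projective structure in the codimension-$\ge 3$ regime. The identification in the first step of the positive product on $X$ with an intersection number on $Z$ is a further technical point, requiring the theory of positive products for big classes as set up in \cite{IJZ25}.
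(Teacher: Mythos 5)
Your route is genuinely different from the paper's: you propose the classical differential-geometric argument (singular K\"ahler--Einstein metric on $Z$, pointwise sum-of-squares identity for the Miyaoka--Yau integrand, constant holomorphic sectional curvature, then uniformization), whereas the paper never uses the KE metric on $Z$ at all and instead runs an algebraic argument through the canonical extension sheaf. Unfortunately, two of your steps are genuine gaps, not technical points. First, your plan hinges on the Chern--Weil identification: that integrating the curvature of the singular KE metric over $Z_{\reg}$ computes the intersection numbers in \eqref{eq-MY-equality intro} with no boundary contribution from $\Sing(Z)$. But the asymptotic behaviour of klt KE metrics near singular strata of codimension $\ge 3$ is not known (the paper's own remark stresses this, citing \cite{BG14} and \cite{GPP25}), so there is at present no way to exclude boundary terms; the paper's whole apparatus --- tame approximations, admissible HYM connections, and the Fatou-type argument of \cite{Chen25} in Proposition \ref{prop-semistable} --- exists precisely to bypass this. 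Relatedly, your first step rewrites the hypothesis on $Z$ using $c_2(Z)$, but $Z$ need not be smooth in codimension $2$ even when $X$ is, so that intersection number is not even defined without orbifold Chern classes; the paper avoids this by working on a resolution $W$ of the birational map $X \dashrightarrow Z$ with the sheaf $p^{*}\mathcal{E}_X/\tor$ and Lemma \ref{mov prod resol}, never forming $c_2(Z)$ itself.

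Second, the result you invoke at the end --- a ``Fano analogue of Yau's uniformization theorem in the klt setting,'' i.e.\ constant positive holomorphic sectional curvature on the (incomplete) regular locus implies a quasi-\'etale cover by $\mathbb{C}\mathbb{P}^n$ --- is not an available citation; the uniformization statements that do exist in the klt literature (\cite{GKPT19}, \cite{GKP22}) are themselves proved by the algebraic route that the paper follows, so invoking such a theorem here is essentially circular. That route is: $Z$ is a K-stable klt Fano variety by \cite{Xu23}; by \cite{DGP24} its canonical extension sheaf $\mathcal{E}_Z$, of rank $n+1$ with $c_1(\mathcal{E}_Z)=c_1(Z)$ and $c_2(\mathcal{E}_Z)=c_2(Z)$, is $c_1(-K_Z)^{n-1}$-polystable, and its Bogomolov--Gieseker discriminant is exactly the Miyaoka--Yau discriminant $2(n+1)c_2-nc_1^2$; equality plus polystability yields projective flatness (Proposition \ref{prop-semistable}); one then passes to the maximal quasi-\'etale cover \cite{GKP16}, deduces local freeness of the tangent sheaf and hence smoothness of the cover by \cite{GKKP11}, and concludes $\widehat{Z}\cong\mathbb{C}\mathbb{P}^n$ by Kobayashi--Ochiai \cite{KO73}. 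Note also that your framing via ``the BG inequality for $T_Z$'' conflates two different discriminants: the rank-$n$ tangent sheaf has BG discriminant $2nc_2-(n-1)c_1^2$, not the MY discriminant appearing in \eqref{eq-MY-equality intro}; the pointwise sum-of-squares identity you want requires the full KE condition (equivalently, working with the rank-$(n+1)$ canonical extension), and this mismatch is precisely why the paper argues with $\mathcal{E}_Z$ rather than with $\mathcal{T}_Z$.
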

Third one gives an example of a vector bundle that can be treated as a semistable object only when semistability is considered with respect to nef and big classes.
\begin{thm}[=Proposition \ref{eg nef}]\label{eg nef stable}
   Let $E$ be a holomorphic vector bundle on $\mathbb{C}P^2$ defined by the following short exact sequence
   $$
   0\to\mathcal{O}_{\mathbb{C}P^2}\to E\to\mathcal{J}_{\{x_1,x_2\}}\otimes\mathcal{O}_{\mathbb{C}P^2}(2)\to0.
   $$
   Let $\pi:X\to\mathbb{C}P^2$ be a blow up at a point and $C$ be the exceptional curve. Then a holomorphic vector bundle $F:=\pi^*E\oplus(\pi^*\mathcal{O}(1)\otimes\mathcal{O}(C))$ on $X$ satisfies the followings:\\
   $(1)$ $F$ is $c_1(\pi^*\mathcal{O}(1))$-semistable.\\
   $(2)$ For any ample line bundle $H$ on $X$, $F$ is not $c_1(H)$-semistable.\\
   $(3)$ $F$ satisfies the Bogomolov-Gieseker inequality.
\end{thm}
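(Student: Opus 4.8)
The plan is to reduce everything to intersection-number computations on the blow-up $X$, where $\operatorname{Pic}(X)=\Z L\oplus\Z C$ with $L:=c_1(\pi^*\mathcal{O}(1))$, $C$ the class of the exceptional curve, $L^2=1$, $L\cdot C=0$, $C^2=-1$, and to isolate part $(1)$ as the only step requiring a genuine stability argument. First I would record the numerical invariants. From the defining sequence the Whitney formula gives $c_1(E)=2H$ and $c_2(E)=2$ (the length of $\{x_1,x_2\}$), so $\rk E=2$ and $\mu_H(E)=1$. Writing $M:=\pi^*\mathcal{O}(1)\otimes\mathcal{O}(C)$, with $c_1(M)=L+C$, a direct computation gives $c_1(F)=2L+(L+C)=3L+C$ and, since $F$ splits, $c_2(F)=c_2(\pi^*E)+c_1(\pi^*E)\cdot c_1(M)=2+2L\cdot(L+C)=2+2=4$.

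For part $(1)$ I would first check that $E$ is $H$-semistable: a rank-one saturated subsheaf is some $\mathcal{O}(k)$, and a nonzero map $\mathcal{O}(k)\to E$ either factors through $\mathcal{O}\subset E$ (forcing $k\le 0$) or maps nontrivially to $\mathcal{J}_{\{x_1,x_2\}}(2)$, i.e. $H^0(\mathcal{J}_{\{x_1,x_2\}}(2-k))\ne 0$, which fails for $k\ge 2$; hence $\mu_H(\mathcal{S})\le 1=\mu_H(E)$ for every subsheaf $\mathcal{S}$. Next, since $L$ restricts to the hyperplane class under the isomorphism $X\setminus C\cong\mathbb{C}P^2\setminus\{p\}$, and since $(aL+bC)\cdot L=a$ depends only on the $L$-coefficient (as $L\cdot C=0$), any saturated subsheaf $\mathcal{S}\subset\pi^*E$ extends across the codimension-two center $p$ to a saturated subsheaf $\mathcal{S}'\subset E$ of the same rank with $\deg_H\mathcal{S}'=\deg_L\mathcal{S}$; the $H$-semistability of $E$ then yields $\mu_L(\mathcal{S})\le 1=\mu_L(\pi^*E)$, so $\pi^*E$ is $L$-semistable. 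As $M$ is a line bundle with $\mu_L(M)=(L+C)\cdot L=1=\mu_L(F)$, and since a direct sum of $L$-semistable sheaves of equal $L$-slope is $L$-semistable (a formal consequence of the additivity of $c_1(\,\cdot\,)\cdot L$ in short exact sequences, valid for the nef class $L$), $F$ is $c_1(\pi^*\mathcal{O}(1))$-semistable.

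Part $(2)$ is then a one-line slope comparison. Every ample class on $X$ is of the form $H'=aL-mC$ with $a>m>0$, so $\mu_{H'}(F)=\tfrac{1}{3}(3L+C)\cdot H'=a+\tfrac{m}{3}$, whereas the direct summand $M$ has $\mu_{H'}(M)=(L+C)\cdot H'=a+m$. Since $m>0$ for every ample $H'$, we obtain $\mu_{H'}(M)-\mu_{H'}(F)=\tfrac{2m}{3}>0$, so the subbundle $M$ destabilizes $F$ for every ample polarization; note that along the nef but non-ample ray $\R_{>0}L$ (where $m=0$) the two slopes coincide, which is precisely why semistability survives only for the nef and big class $L$. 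For part $(3)$ I would substitute the invariants into the rank-$3$ discriminant: $2rc_2(F)-(r-1)c_1(F)^2=6\cdot 4-2\cdot(3L+C)^2=24-2\cdot 8=8\ge 0$, so $F$ satisfies the Bogomolov-Gieseker inequality.

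The main obstacle is part $(1)$, specifically the $L$-semistability of $\pi^*E$: because $L$ is only nef and big, one cannot invoke the standard pullback statements available for ample polarizations, and the argument must instead pass through the restriction isomorphism over $X\setminus C$ together with the extension of subsheaves across the codimension-two center $p$. Everything else is either the elementary semistability check for the Serre-type bundle $E$ or pure intersection arithmetic.
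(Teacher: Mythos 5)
Your proposal is correct, and parts (2) and (3) coincide with the paper's computations almost line by line: the same description of the ample cone of the blow-up (which the paper proves in a separate lemma and you assert; it is standard for $\mathbb{F}_1$), the same slope comparison $\mu_{H'}(M)-\mu_{H'}(F)=2m/3>0$ showing the summand $\pi^*\mathcal{O}(1)\otimes\mathcal{O}(C)$ destabilizes, and the same Chern number computation $24-16=8>0$. The genuine difference is in part (1). The paper descends to $\mathbb{C}P^2$: it observes $\pi_{[*]}F=E\oplus\mathcal{O}(1)$, quotes the semistability of $E$ from \cite{OSS80}, applies the direct-sum criterion for equal-slope semistable sheaves (\cite{Kob87}, Ch.~5, Prop.~7.9) downstairs, and then invokes the bimeromorphic invariance of semistability \cite[Proposition 3.28]{IJZ25} to transfer the conclusion to $F$ on $X$. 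You instead work entirely upstairs: you re-prove semistability of $E$ via the Serre-type sequence (correctly ruling out destabilizing $\mathcal{O}(k)$ with $k\ge 2$ using $H^0(\mathcal{J}_{\{x_1,x_2\}}(2-k))=0$), prove $L$-semistability of $\pi^*E$ by extending saturated subsheaves across the codimension-two center and matching $c_1$ through $H^2(X)\to H^2(X\setminus C)$, and then apply the direct-sum argument on $X$ for the nef class $L$. Your route is self-contained and makes visible exactly where nefness enters (torsion quotients have nonnegative $L$-degree, so saturation only increases slope and the Kobayashi-type direct-sum argument survives the degenerate polarization), at the cost of having to justify two standard facts — the extension of saturated subsheaves across codimension two with control of $c_1$, and the direct-sum lemma for a non-ample class — which you gesture at but should spell out; the paper's route is shorter but leans on the cited machinery of \cite{IJZ25}. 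Both are valid proofs of the statement as given.
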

\begin{rem}
    (About Theorem \ref{lt HE intro}) There are several results on the existence of HYM metrics if K\"{a}hler metrics have at most orbifold singularities (eg, \cite{Faulk22},\cite{CGNPPW23}, \cite{FO25}) or conical singularities (eg, \cite{Li00}, \cite{JL25}). In particular, \cite{FO25} proved the existence of HYM metrics for stable reflexive sheaves on klt varieties with orbifold K\"{a}hler metrics via orbifold partial resolution of \cite{Ou25}.
    We remark that the log terminal K\"{a}hler-Einstein metric $\omega$ is orbifold smooth on the orbifold singularities of $X$ (refer to \cite[Remark 1.1]{GPP25} and references in \cite{GPP25}). On the other hand, the singularities of $\omega$  around the higher codimensional singular locus of $X$ have not yet been identified (refer to \cite[around Theorem C]{BG14} and papers that cite \cite{BG14}). Theorem \ref{lt HE intro} is proved by slightly modifying the argument in \cite{Jin25-2} and the proof is independent of the existence of model metrics of $\omega$ around singularities.\\
    (About Theorem \ref{BG ineq intro}) There are several studies about the Miyaoka-Yau inequality and the structure theorem in the case that the equality holds (refer to the introduction of \cite{IJZ25} and references therein for detailed history and related developments concerning the Miyaoka-Yau inequality). We briefly summarize the references of known results. If $X$ is smooth and $K_X$ is ample, refer to \cite{Miy77} when $n=\dim X=2$ and \cite{Yau77} when $X$ is of arbitrary dimension. If $K_X$ is nef, refer to \cite{GKPT19} when $X$ is klt and \cite{GT22} when $(X,D)$ is dlt. If $X$ is projective klt and $-K_X$ is nef, refer to \cite{GKP22}. If $X$ is klt and $K_X$ is big, or $-K_X$ is big and $X$ is $K$-semistable, refer to \cite{IJZ25}. In particular, the authors only proved the inequality in \cite{IJZ25}. Theorem \ref{BG ineq intro} in this paper proves the structure theorem of projective varieties with $-K_X$ big. The remaining open problem is the characterization of the equality of the Miyaoka-Yau inequality when $K_X$ is big. Its resolution will require the Kobayashi-Hitchin correspondence of reflexive Higgs sheaves on singular varieties. The generalizations to klt pairs and dlt pairs are also remaining problems.\\
    (About Theorem \ref{eg nef stable}) In \cite{IJZ25}, the authors proved that the Bogomolov-Gieseker inequality holds for a reflexive sheaf $\mathcal{E}$ that is slope semistable with respect to a big class $\alpha$.  If $\alpha$ is nef and big, and $\mathcal{E}$ is $\alpha^{n-1}$-slope stable, then the Bogomolov-Gieseker inequality of $\mathcal{E}$ with respect to $\alpha$ follows easily from the Bogomolov-Gieseker inequality with respect to $\alpha+\varepsilon\omega$, where $\omega$ is a K\"{a}hler class. Indeed, it is known that $\mathcal{E}$ is slope stable for $\alpha+\varepsilon\omega$ for sufficiently small $\varepsilon>0$ (eg, \cite[Lemma 4.8]{Jin25-2}). The proof for semistable sheaves is more complicated than the stable case, since one cannot prove the openness of semistability. Theorem \ref{eg nef stable} provides an explicit example of a vector bundle whose semistability fails to be an open condition. Theorem \ref{eg nef stable} exhibits a vector bundle that can be treated as a semistable object only when semistability is considered with respect to nef and big classes.
\end{rem}
\section*{Acknowledgments}
The author would like to thank the supervisor Ryushi Goto for encouraging to write this paper. The author also thanks Prof. Masataka Iwai for his valuable comments about Miyaoka-Yau inequality. The author would like to thank Prof. Yuji Odaka for introducing the important paper \cite{CCHTST25}.
\section{Preliminaries}
In this paper, normal analytic varieties are always reduced and irreducible.
\subsection{Sobolev inequality and mean value inequality}
\begin{defn}
    Let $X$ be a compact normal analytic variety. Let $\{U_i\}_{i=1,\ldots k}$ be an open cover of $X$ with closed embeddings $\varphi_i:U_i\hookrightarrow\Omega_i\subset \mathbb{C}^{N_i}$ to open subsets $\Omega_i$ in $\mathbb{C}^{N_i}$.
    A smooth K\"{a}hler metric on $X$ is a smooth K\"{a}hler metric $\omega$ on $X_{\reg}$ such that, for any $i$, there is a smooth K\"{a}hler metric $\omega_{\Omega_i}$ on $\Omega_i$ such that $\omega|_{U_i\cap X_{\reg}}=\varphi_i^*\omega_{\Omega_i}|_{X\reg}$.We can define a closed positive $(1,1)$-current on $X$ by the same way. \\
    For a closed positive $(1,1)$-current $T$ on $X$, there is a plurisubharmonic function $\varphi_i$ on $\Omega_i$ such that $T|_{U_i}=dd^c\varphi_i|_{U_i}$. We say that $\varphi_i$ is a local potential of $T$. If any local potential $\varphi_i$ is locally bounded for any $i$, we say that $T$ is a closed positive $(1,1)$-current with bounded potentials.
\end{defn}
The readers can refer to \cite{CCHTST25} for the definition of singular K\"{a}hler-Einstein metrics on klt varieties. 
\begin{defn}[\cite{CCHTST25} Definition 1.4]\label{tame app}
    Let $\omega$ be a closed positive $(1,1)$-current with bounded potentials on a compact normal variety $X$ such that $\omega$ is smooth on $X_{\reg}$ which is cohomologas to a smooth K\"{a}hler metric $\omega_X$ on $X$. Let $\pi:Y\to X$ be a resolution of singularities that is biholomorphic over $X_{\reg}$ and $\omega_Y$ be a smooth K\"{a}hler metric on $Y$. We say that a sequence of smooth K\"{a}hler metrics $\omega_i$ in $\{\pi^*\omega_X\}+(1/i)\{\omega_Y\}$ is a tame approximation of $\omega$ if we have the following:
    \begin{itemize}
        \item If we denote by $\omega_i=\pi^*\omega_X+\omega_Y+dd^cu_i$, then $\omega_i\to\pi^*\omega$ locally smoothly on $\pi^{-1}(X_{\reg})$ and 
        \item there are constants $C>0$ and $p>1$ independent of $i$ such that 
        $$
        \sup_Y|u_i|<C \hbox{,  } \int_Y\left(\frac{\omega_i^n}{\omega_Y^n}\right)^p\omega_Y^n<C.
        $$
    \end{itemize}
\end{defn}

The most important property of the klt K\"{a}hler-Einstein metrics in this paper is the following:
\begin{thm}[\cite{CCHTST25} Theorem 1.2, Proposition 3.1, \cite{GPSS23} Theorem 2.1]\label{sob ineq}
    Suppose that $(X,\omega_X)$ is a compact normal K\"{a}hler variety with a smooth K\"{a}hler metric $\omega_X$. Suppose that $\omega=\omega_X+dd^cu$ is a closed positive $(1,1)$-current with bounded potential satisfying the following properties:
    \begin{itemize}
        \item $u$ is smooth on $X_{\reg}$,
        \item $\omega^n=e^F\omega_X$ where $e^F\in L^p(\omega_X^n)$ for some $p>1$ and $F\in L^1(\omega_X^n)$.
        \item ${\rm{Ric}}(\omega)\ge -C(\omega+\omega_X)$ on $X_{\reg}$ in the sense of current. 
    \end{itemize}
    Then $\omega$ satisfies the followings:
    \begin{enumerate}
        \item[$(1)$] \cite[Theorem 1.2]{CCHTST25} $\omega$ is a K\"{a}hler current on $X$, that is, there is a constant $C>0$ such that $\omega\ge C^{-1}\omega_X$ holds on $X$ as a current.
        \item[$(2)$] \cite[Proposition 3.1]{CCHTST25} We can find a tame approximation of $\omega$ with smooth K\"{a}hler metrics $\omega_i$ in $\pi^*\{\omega_X\}+\frac{1}{i}\{\omega_Y\}$ on a resolution $\pi:Y\to X$.
        \item[$(3)$] \cite[Theorem 2.1]{GPSS23} The tame approximation $\omega_i$ in $(2)$ above satisfies the uniform Sobolev inequality, that is, there are constants $q>1$ and $C>0$ independent of $i$ such that 
        $$
        \left(\frac{1}{\{\omega_i\}^n}\int_Y|u-\overline{u}|^{2q}\omega_i^n\right)^{1/q}
        \le \frac{C}{\{\omega_i\}^n}\int_Y|\nabla u|_{\omega_i}^2\omega_i^n
        $$
        holds for any $u\in L^2_1(X,\omega_i)$ where $\overline{u}=\frac{1}{\{\omega_i\}^n}\int_Yu\omega_i^n$.
        \item[$(4)$] {\rm{(\cite[Lemma 6.3]{GPSS23}, see also \cite[proof of Proposition 4.13]{Jin25-2})}} If a sequence of nonnegative smooth functions $u_i\in C^{\infty}(Y)$ satisfies $\Delta_{\omega_i}u_i\ge -N$ for some constant $N>0$, then there exists a constant $C>0$ such that the following holds:
        $$
        \sup_Yu_i\le C\int_Yu_i\omega_i^n.
        $$
    \end{enumerate}
\end{thm}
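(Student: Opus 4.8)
The statement is a synthesis of the analytic input of \cite{CCHTST25} and \cite{GPSS23}, so the plan is to verify that the three standing hypotheses on $\omega$---bounded potential with $u$ smooth on $X_{\reg}$, the Monge--Amp\`ere density $e^F\in L^p(\omega_X^n)$ with $F\in L^1(\omega_X^n)$, and the Ricci lower bound $\Ric(\omega)\ge -C(\omega+\omega_X)$ in the current sense---are exactly the hypotheses required by each cited theorem, and then to invoke them in the order $(1)\Rightarrow(2)\Rightarrow(3)\Rightarrow(4)$. For $(1)$ I would apply \cite[Theorem 1.2]{CCHTST25} directly: the three hypotheses above are precisely those under which $\omega$ is shown to be a K\"ahler current, i.e. $\omega\ge C^{-1}\omega_X$ as currents on all of $X$; the only bookkeeping is to note that $\omega=\omega_X+dd^cu$ with $u$ bounded places us in the bounded-potential setting of loc. cit.

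For $(2)$ I would invoke \cite[Proposition 3.1]{CCHTST25}. Fixing a resolution $\pi:Y\to X$ that is biholomorphic over $X_{\reg}$ and a smooth K\"ahler metric $\omega_Y$ on $Y$, the construction produces metrics $\omega_i=\pi^*\omega_X+\frac1i\omega_Y+dd^cu_i$ by solving a family of complex Monge--Amp\`ere equations whose densities are governed by $e^F$. The uniform bounds $\sup_Y|u_i|<C$ and $\int_Y(\omega_i^n/\omega_Y^n)^p\,\omega_Y^n<C$ come from the $L^p$ control of the density together with the uniform $L^\infty$ estimates for such equations, while the local smooth convergence $\omega_i\to\pi^*\omega$ over $\pi^{-1}(X_{\reg})$ follows from interior higher-order estimates on the regular locus.

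The tame approximation of $(2)$ then furnishes exactly the uniform volume and uniform $L^p$-density control that is the hypothesis of \cite[Theorem 2.1]{GPSS23}, so feeding the family $\{\omega_i\}$ into that theorem yields the uniform Sobolev inequality of $(3)$ with constants $q>1$ and $C>0$ independent of $i$. Part $(4)$ is then a standard Moser iteration: combining the uniform Sobolev inequality of $(3)$ with the differential inequality $\Delta_{\omega_i}u_i\ge -N$ and running the iteration on the nonnegative functions $u_i$ produces the mean value bound $\sup_Y u_i\le C\int_Y u_i\,\omega_i^n$, exactly as in \cite[Lemma 6.3]{GPSS23}.

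The main obstacle is not any single deep estimate but the coordination of the uniform constants across the chain: the Sobolev inequality of $(3)$ is only as strong as the uniform $L^p$-density bound supplied by the tame approximation, so the delicate point is to check that the constants $C$ and $p$ produced in $(2)$ are genuinely independent of $i$ and compatible with the normalization by the total volume $\{\omega_i\}^n$ used in the Sobolev and mean value statements. Once these uniformities are pinned down, $(3)$ and $(4)$ are formal consequences of the cited results, and the real content of the theorem lies in $(1)$ and $(2)$, where the K\"ahler-current property and the existence of a tame approximation are extracted from the bounded-potential and Ricci-lower-bound hypotheses.
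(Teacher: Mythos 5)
Your proposal is correct and matches the paper exactly: the paper gives no independent proof of this theorem, stating it as a direct citation of \cite[Theorem 1.2, Proposition 3.1]{CCHTST25} for parts $(1)$--$(2)$ and \cite[Theorem 2.1, Lemma 6.3]{GPSS23} for parts $(3)$--$(4)$, which is precisely the assembly you describe. Your additional remarks on hypothesis-matching and the uniformity of the constants $p$, $C$ across the tame approximation are sound and consistent with how the theorem is used later (e.g.\ in the proof of Theorem \ref{lt HE}), so nothing further is needed.
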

\subsection{Positive product}
In this section, we briefly introduce the positive products (or movable intersection products) of big classes.
\begin{defn}
    Let $X$ be a compact normal variety and $\alpha\in H^{1,1}_{BC}(X)$ be a Bott-Chern cohomology class (see eg, \cite[\S 2.2.3]{IJZ25}). Then\\
    $(1)$ $\alpha$ is a K\"{a}hler class if it is represented by a smooth K\"{a}hler metric.\\
    $(2)$ $\alpha$ is a big class if $\alpha$ is represented by a K\"{a}hler current. Here a closed positive $(1,1)$-current $T$ is a K\"{a}hler current if $T\ge -\varepsilon\omega$ where $\omega$ is a positive definite smooth hermitian form on $X$.\\
    $(3)$ Assume that $X$ is smooth. For a big class $\alpha$, the ample locus of $\alpha$ is defined by
    $$
    \Amp(\alpha):=\{x\in X\mid \hbox{There is a K\"{a}hler current $T\in\alpha$ that is smooth around $x$.} \}
    $$ 
    The complement $E_{nK}(\alpha):=X\setminus \Amp(\alpha)$ is the nonK\"{a}hler locus of $\alpha$. By \cite{Bou04}, the ample locus $\Amp(\alpha)$ is a Zariski open set.
\end{defn}
\noindent If $\alpha=\pi^*\omega$ where $\pi:X\to Y$ is a proper bimeromorphic morphism and $\omega\in H^{1,1}_{BC}(Y)$ is a K\"{a}hler class, then $E_{nK}(\alpha)$ coincides with the $\pi$-exceptional divisor.\\
Assume that $X$ is a compact K\"{a}hler manifold and $\alpha$ is a big class on $X$. Following to \cite[Theorem 3.5]{BDPP13}, we introduce the positive product of $\alpha$: We choose a K\"{a}hler current $T$ in $\alpha$ whose local potentials have logarithmic singularities and a log resolution $\mu:\widetilde{X}\to X$ such that
\begin{equation}\label{appro zariski decomp}
\mu^*T=\beta+[D]
\end{equation}
where $\beta$ is a smooth semipositive form and $[D]$ is an integral current along an effective divisor. Then, for any semi-positive class $u\in H^{n-k,n-k}(X,\mathbb{R})$, there is a constant $C_u$ independent of $T$ such that $\int_{\widetilde{X}}\beta^k\wedge u\le C_u$. Thus we can find a sequence of K\"{a}hler currents $T_j$ in $\alpha$ such that
\begin{equation}\label{positive product app}
\lim_{j\to\infty}\int_{\widetilde{X}_j}\beta_j\wedge \mu_j^*u=\sup_{T\in\alpha}\int_{\widetilde{X}}\beta\wedge \mu^*u.
\end{equation}
Here $\mu_j:\widetilde{X}_j\to X$ is a modification such that $\mu_j^*T_j=\beta_j+[D_j]$ and, supremum in RHS is taken over closed positive $(1,1)$-currents $T\in\alpha$ and $\mu:\widetilde{X}\to X$ is a modification such that $\mu^*T=\beta+[D]$ as above.
Therefore, if we choose a basis $u_1\ldots,u_l$ of $H^{n-k,n-k}(X,\mathbb{R})$ consisting of positive classes as $u$, we can find a subsequence of $T_j$ such that (\ref{positive product app}) holds for any positive class $u$. Then, by Poincar\'{e} duality, we can find a limit
$\langle\alpha^{k}\rangle:=\lim_{j\to\infty}\mu_{j,*}(\beta_j^k)\in H^{k,k}(X,\mathbb{R})$, which is the positive product of $\alpha$.
\begin{rem}
    The positive product $\langle\alpha^k\rangle$ is independent of the choices of $T_j$ and the basis $u_1,\ldots,u_l$. In fact, $\langle\alpha^k\rangle$ coincides with $\{\langle T_{\min}^k\rangle\}$ which is the de-Rham cohomology class of the nonpluripolar product of a closed positive $(1,1)$-current $T_{\min}$ in $\alpha$ with minimal singularities \cite[Proposition 1.18]{BEGZ10}. And $\{\langle T_{\min}^k\rangle\}$ is independent of the choices of $T_{\min}$ \cite[Theorem 1.16]{BEGZ10}.
\end{rem}
The decomposition (\ref{appro zariski decomp}) gives an approximate Zariski decomposition. Following lemma says that if a big class admits a Zariski decomposition, its positive product coincides with the positive part of the Zariski decomposition.
\begin{lem}[\cite{DHY23} Lemma A.5, see also \cite{IJZ25} Proposition 3.15]\label{mov prod resol}
    Let $X$ be a compact K\"{a}hler manifold and $Y$ be a compact normal variety. Let $\alpha$ be a big class on $X$ and $\omega\in H^{1,1}_{BC}(Y)$ be a K\"{a}hler class on $Y$. Suppose that there exists a proper bimeromorphic morphism $\pi:X\to Y$ and an effective $\pi$-exceptional divisor $D$ such that $\alpha=\pi^*\omega+[D]$ holds. Then $\langle\alpha^k\rangle=(\pi^*\omega)^k$ holds.
\end{lem}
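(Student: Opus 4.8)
The plan is to identify the given decomposition $\alpha = \pi^*\omega + [D]$ with the divisorial Zariski decomposition of $\alpha$ and then exploit the fact that the positive product only detects the (nef) positive part. Recall from the remark in the preceding subsection that $\langle\alpha^k\rangle$ equals the de Rham class $\{\langle T_{\min}^k\rangle\}$ of the non-pluripolar product of a positive current $T_{\min}\in\alpha$ with minimal singularities \cite{BEGZ10}. I would therefore first exhibit an explicit candidate for such a $T_{\min}$: fixing a smooth K\"ahler form $\omega_Y$ on $Y$ representing $\omega$, the current
\[
T_0 := \pi^*\omega_Y + [D]
\]
is a closed positive current in $\alpha$, and it equals the smooth semipositive form $\pi^*\omega_Y$ on the Zariski-open set $X\setminus \supp D$.

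Granting for the moment that $T_0$ has minimal singularities, the computation is immediate: on $X\setminus\supp D$ the current $T_0$ coincides with the smooth form $\pi^*\omega_Y$, so its non-pluripolar $k$-th product there is the honest power $(\pi^*\omega_Y)^k$, and since $\supp D$ is pluripolar while the smooth form $(\pi^*\omega_Y)^k$ carries no mass on it, the non-pluripolar product $\langle T_0^k\rangle$ agrees with $(\pi^*\omega_Y)^k$ as a current on all of $X$. Passing to cohomology yields $\langle\alpha^k\rangle = \{(\pi^*\omega_Y)^k\} = (\pi^*\omega)^k$, as required. Equivalently, one may package the argument through Boucksom's divisorial Zariski decomposition \cite{Bou04}: since $\pi^*\omega$ is big and nef (being the pullback of a K\"ahler class under a bimeromorphic map) and $D$ is $\pi$-exceptional, hence an effective divisor with negative-definite intersection form along the fibers of $\pi$, the pair $(\pi^*\omega, D)$ satisfies the defining properties of the positive and negative parts; by uniqueness of the decomposition one has $P(\alpha)=\pi^*\omega$ and $N(\alpha)=D$, and since $P(\alpha)$ is nef the reduction $\langle\alpha^k\rangle = \langle P(\alpha)^k\rangle = (\pi^*\omega)^k$ is formal.

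The main obstacle, and the only genuine content, is the assertion that $T_0$ has minimal singularities, equivalently that $N(\alpha) = D$. One inequality is automatic: choosing a smooth representative $\theta\in\alpha$ and the Lelong--Poincar\'e potential $\log|s_D|^2$ of $[D]$ relative to $\theta$, this potential is a competitor in the envelope defining the minimal-singularity current, so the minimal-singularity current is no more singular than $T_0$. The substantive reverse inequality is that every closed positive current $T\in\alpha$ has generic Lelong number $\nu(T,E_i)\ge m_i$ along each component $E_i$ of $D=\sum_i m_i E_i$; this forces the minimal-singularity potential to agree with $\log|s_D|^2$ up to a bounded term, i.e.\ $T_0$ is least singular. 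Here $\pi$-exceptionality of $D$ enters decisively, and I expect this to be the crux. I would derive it from the negativity of exceptional divisors \cite{Bou04}: pushing forward gives a positive current $\pi_*T$ in the K\"ahler class $\omega$ (as $\pi_*[D]=0$), and comparing $T$ with $\pi^*\pi_*T$ through the negativity lemma forces the excess of $T$ over $\pi^*\omega$ to carry at least $D$ along the exceptional locus, hence $\nu(T,E_i)\ge m_i$. This is precisely the step requiring that $\omega$ be genuinely K\"ahler on $Y$ and that $D$ be $\pi$-exceptional, rather than merely that $\pi^*\omega$ be nef on $X$; the remaining hypotheses are used only to ensure $\pi^*\omega$ is big and nef, so that $\langle(\pi^*\omega)^k\rangle=(\pi^*\omega)^k$.
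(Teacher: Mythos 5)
The paper contains no internal proof of this lemma: it is imported verbatim from \cite[Lemma A.5]{DHY23} (see also \cite[Proposition 3.15]{IJZ25}), so the only possible comparison is with the argument of those references, and your proposal follows essentially that same route: exhibit $T_0:=\pi^*\omega_Y+[D]$ as a current with minimal singularities in $\alpha$, then evaluate the non-pluripolar product, which puts no mass on the pluripolar set $\supp D$, and use $\langle\beta^k\rangle=\beta^k$ for $\beta$ nef together with the identification $\langle\alpha^k\rangle=\{\langle T_{\min}^k\rangle\}$ recorded in the paper's remark from \cite{BEGZ10}. The outline is correct; two steps deserve tightening. First, in the reverse inequality: the pushforward $S=\pi_*T$ lies in the K\"ahler class $\omega$ on the \emph{normal} space $Y$, and to form $\pi^*S$ you need $S$ to have a global potential relative to $\omega_Y$, i.e.\ a $\partial\overline{\partial}$-lemma for currents on normal K\"ahler spaces; granting this, $T-\pi^*S$ is a closed current of order zero supported on $\Exc(\pi)$, so by Demailly's support theorem it equals $\sum_i\lambda_i[E_i]$, and pinning $\lambda_i=m_i$ requires the linear independence of the Bott--Chern classes of $\pi$-exceptional prime divisors --- this is where the negativity lemma of \cite{Bou04} genuinely enters, rather than any literal ``negative-definite intersection form along the fibers,'' which is not accurate in higher dimension. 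Second, the statement that $\nu(T,E_i)\ge m_i$ ``forces the minimal-singularity potential to agree with $\log|s_D|^2$ up to a bounded term'' overstates what Lelong numbers give: what follows, via the Siu decomposition, is $T-[D]\ge 0$, i.e.\ $T=R+[D]$ with $R$ a positive current in the class $\pi^*\omega$, and then the minimal singularity of $T_0$ follows because any $\pi^*\omega_Y$-psh potential on the compact manifold $X$ is bounded \emph{above} (not bounded); the potential of $T$ may well be strictly more singular than that of $T_0$. With these repairs your argument is complete and is, in substance, the proof given in the cited references.
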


\subsection{Slope stability and the HYM metrics}
In this section, we briefly introduce the slope stability and the Hermitian-Yang-Mills metrics on singular varieties (cf. \cite{IJZ25}). For any bimeromorphic morphism $\pi:Y\to X$ and coherent sheaf $\mathcal{E}$ on $X$, we denote the reflexive pull back of $\mathcal{E}$ by $\pi^{[*]}\mathcal{E}:=(\pi^*\mathcal{E})^{\vee\vee}$.
\begin{defn}
    Let $X$ be a compact normal analytic variety and $\alpha\in H^{1,1}_{BC}(X)$ be a big class on $X$. We fix a resolution of singularities $\pi:Y\to X$. \\
    $(1)$ The $\langle\alpha^{n-1}\rangle$-slope of a torsion free coherent sheaf $\mathcal{E}$ is defined by
    $$
    \mu_{\langle\alpha^{n-1}\rangle}(\mathcal{E}):=\frac{1}{\rk\mathcal{E}}\int_Yc_1(\pi^{[*]}\mathcal{E})\wedge\langle(\pi^*\alpha)^{n-1}\rangle.
    $$
    $(2)$ A reflexive coherent sheaf $\mathcal{E}$ is $\langle\alpha^{n-1}\rangle$-slope (semi) stable if for any torsion free subsheaf $\mathcal{F}$ of $\mathcal{E}$ with $0<\rk\mathcal{F}<\rk\mathcal{E}$ that has the torsion free quotient $\mathcal{E}/\mathcal{F}$, the inequality $\mu_{\alpha^{n-1}}(\mathcal{F})<(\le)\mu_{\alpha^{n-1}}(\mathcal{E})$ holds.
\end{defn}
A Moishezon variety is a complex analytic variety which is bimeromorphic to a complex projective manifold.
\begin{lem}[\cite{Jin25}, \cite{IJZ25} Lemma 4.14]
    If $X$ is Moishezon or $\alpha$ satisfies the vanishing property in the sense of \cite[Definition 3.23, Lemma 3.24]{IJZ25}, the above definition of slope (semi) stability is independent of the choices of resolutions $\pi$.
\end{lem}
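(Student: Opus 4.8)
The plan is to prove that the slope is unchanged when the resolution is refined, from which resolution-independence of (semi)stability follows immediately: the collection of torsion-free subsheaves $\mathcal{F}\subset\mathcal{E}$ with torsion-free quotient lives on $X$ and does not see the choice of $\pi$, while (semi)stability is merely a collection of inequalities among the numbers $\mu(\mathcal{F})$ and $\mu(\mathcal{E})$. First I would reduce to a single dominating morphism. Given two resolutions $\pi_1\colon Y_1\to X$ and $\pi_2\colon Y_2\to X$, choose a resolution $W$ of the fibre product dominating both, with $q_i\colon W\to Y_i$ and $\pi_1\circ q_1=\pi_2\circ q_2$. It then suffices to treat a modification $p\colon Y'\to Y$ of two resolutions, set $\rho:=\pi\circ p\colon Y'\to X$, and establish
\begin{equation*}
\int_{Y'}c_1(\rho^{[*]}\mathcal{F})\wedge\langle(\rho^{*}\alpha)^{n-1}\rangle=\int_{Y}c_1(\pi^{[*]}\mathcal{F})\wedge\langle(\pi^{*}\alpha)^{n-1}\rangle
\end{equation*}
for every torsion-free $\mathcal{F}$ on $X$ (applied to $\mathcal{E}$ and to its subsheaves).

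Next I would compare the first Chern classes. Put $\mathcal{G}:=\pi^{[*]}\mathcal{F}$ and $\beta:=\pi^{*}\alpha$, so that $\rho^{*}\alpha=p^{*}\beta$. Over the locus where $\rho$ is a local isomorphism onto $X_{\reg}$, both $\rho^{[*]}\mathcal{F}$ and $p^{*}\mathcal{G}$ reduce to the pull-back of the torsion-free sheaf $\mathcal{F}$, so their first Chern classes coincide there; and along strict transforms of divisors in $X$ where $\mathcal{F}$ fails to be locally free no discrepancy arises, because $c_1(\mathcal{H}^{\vee\vee})=c_1(\mathcal{H})$ for torsion-free $\mathcal{H}$. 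Hence $c_1(\rho^{[*]}\mathcal{F})-p^{*}c_1(\mathcal{G})$ is supported on divisors contracted by $\rho$ to codimension $\ge 2$ in $X$, and
\begin{equation*}
c_1(\rho^{[*]}\mathcal{F})=p^{*}c_1(\mathcal{G})+\sum_i a_i E_i,
\end{equation*}
where the $E_i$ are prime $\rho$-exceptional divisors on $Y'$.

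Then I would substitute this into the integral. For the main term, the projection formula together with the push-forward compatibility of positive products under modifications (\cite{BEGZ10}) gives
\begin{equation*}
\int_{Y'}p^{*}c_1(\mathcal{G})\wedge\langle(p^{*}\beta)^{n-1}\rangle=\int_{Y}c_1(\mathcal{G})\wedge p_{*}\langle(p^{*}\beta)^{n-1}\rangle=\int_{Y}c_1(\mathcal{G})\wedge\langle\beta^{n-1}\rangle.
\end{equation*}
It remains to show that each exceptional contribution $\int_{Y'}E_i\wedge\langle(\rho^{*}\alpha)^{n-1}\rangle$ vanishes. This is exactly where the hypothesis enters: if $\alpha$ has the vanishing property of \cite[Definition 3.23, Lemma 3.24]{IJZ25}, this holds by definition; if $X$ is Moishezon, I would pass to a projective model and invoke the strict birational invariance $\langle(p^{*}\beta)^{n-1}\rangle=p^{*}\langle\beta^{n-1}\rangle$, after which $\int_{Y'}E_i\wedge p^{*}\langle\beta^{n-1}\rangle=\int_{Y}(p_{*}E_i)\wedge\langle\beta^{n-1}\rangle=0$. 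Combining the two displays yields the slope equality, and hence the lemma.

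The main obstacle is precisely this vanishing of the exceptional terms. In general the positive product $\langle(p^{*}\beta)^{n-1}\rangle$ really does charge $p$-exceptional divisors: the nonpluripolar product representing it loses mass along the (pluripolar) exceptional locus, so its cohomology class need not equal the naive pull-back $p^{*}\langle\beta^{n-1}\rangle$. It is exactly this defect that the Moishezon or vanishing-property hypothesis eliminates. In the Moishezon case I would justify the strict birational invariance through Fujita approximation and the orthogonality of the positive and negative parts of the divisorial Zariski decomposition of $p^{*}\beta$; in the general K\"ahler case this invariance can fail and must be imposed, which is the content of \cite[Lemma 3.24]{IJZ25}. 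By contrast, the reduction to a common resolution, the codimension-one comparison of the Chern classes, and the push-forward compatibility of positive products are all routine.
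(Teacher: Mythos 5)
Your proposal is correct and follows essentially the same route as the source of this lemma: the paper gives no internal proof, quoting the statement from \cite{Jin25} and \cite[Lemma 4.14]{IJZ25}, and your argument --- dominate two resolutions by a common modification, observe that the discrepancy $c_1(\rho^{[*]}\mathcal{F})-p^*c_1(\pi^{[*]}\mathcal{F})$ is supported on exceptional divisors, handle the main term via $p_*\langle(p^*\beta)^{n-1}\rangle=\langle\beta^{n-1}\rangle$ and the projection formula, and reduce everything to the vanishing $\int_{Y'}E_i\wedge\langle(\rho^*\alpha)^{n-1}\rangle=0$ --- is exactly the cited mechanism, and you correctly identify this vanishing as the crux rather than the routine steps. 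Your treatment of the Moishezon case also matches the paper's own remark immediately after the lemma, namely that on a compact normal Moishezon variety every big class satisfies the vanishing property by \cite{Nystr19} (via differentiability of the volume function, equivalently the orthogonality of the approximate Zariski decomposition that you invoke through Fujita approximation), so the ``strict birational invariance'' you use there is a consequence of, not an independent alternative to, that input.
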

\begin{rem}
   On compact normal Moishezon variety, any big class satisfies the vanishing property by \cite{Nystr19}. The vanishing property is closely related to the differentiability of the volume function $\alpha\mapsto \langle\alpha^n\rangle$ (see eg, \cite{Nystr19} and \cite{Vu23}).
\end{rem}
Next we define the notion of the admissible HYM metrics for closed positive $(1,1)$-currents. For a coherent sheaf $\mathcal{E}$ on a compact normal variety $X$, we denote by $\Sing(\mathcal{E})$ the minimal analytic subset of $X$ such that $\mathcal{E}$ is locally free on the Zariski open set $X\setminus \Sing(\mathcal{E})$. If $\mathcal{E}$ is torsion free, then $\Sing(\mathcal{E})$ is of codimension at least 2.
\begin{defn}[\cite{Jin25-2}]
    Let $X$ be a compact normal analytic variety and $T$ be a closed positive $(1,1)$-current on $X$ which is smooth K\"{a}hler on a Zariski open set $\Omega_T$ in $X$. Let $\mathcal{E}$ be a reflexive sheaf on $X$. Then a $T$-admissible Hermitian-Yang-Mills metric on $\mathcal{E}$ is a smooth hermitian metric on $\mathcal{E}|_{(X_{\reg}\cap\Omega_T)\setminus {\rm{Sing}}(\mathcal{E})}$ such that
    \begin{itemize}
        \item $\sqrt{-1}\Lambda_TF_h=\lambda{\rm{Id}}$ on $(X_{\reg}\cap\Omega_T)\setminus {\rm{Sing}}(\mathcal{E})$ and
        \item $\int_{(X_{\reg}\cap\Omega_T)\setminus {\rm{Sing}}(\mathcal{E})}|F_h|_{h,T}^2T^n<\infty$.
    \end{itemize}
\end{defn}
\section{Admissible HYM metric for klt K\"{a}hler-Einstein metrics}
\begin{thm}\label{lt HE}
     Let $X$ be a compact normal analytic variety with klt singularities. Suppose that $X$ admits a singular K\"{a}hler-Einstein metric $\omega$. Then, if a reflexive coherent sheaf $\mathcal{E}$ on $X$ is $\{\omega\}^{n-1}$-stable, then $\mathcal{E}$ admits an $\omega$-admissible HYM metric.
\end{thm}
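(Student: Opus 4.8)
The plan is to reduce the singular problem to the smooth Kobayashi--Hitchin correspondence on a resolution of $X$ by means of the tame approximation furnished by Theorem \ref{sob ineq}, and then to recover an $\omega$-admissible solution by a limiting argument controlled by the uniform Sobolev and mean value inequalities of that same theorem. Since $\omega$ is a singular K\"{a}hler--Einstein metric, $\mathrm{Ric}(\omega) = \lambda\omega$ holds on $X_{\reg}$, so the hypotheses of Theorem \ref{sob ineq} are met and we obtain a resolution $\pi: Y \to X$, biholomorphic over $X_{\reg}$, together with a tame approximation $\omega_i \in \pi^*\{\omega_X\} + \tfrac{1}{i}\{\omega_Y\}$ enjoying the uniform estimates $(1)$--$(4)$. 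Set $\mathcal{F} := \pi^{[*]}\mathcal{E}$; this is a reflexive sheaf on $Y$, locally free outside an analytic set $\Sing(\mathcal{F})$ of codimension at least two, and it agrees with $\mathcal{E}$ over $X_{\reg}$ under the biholomorphism $\pi$.

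First I would transfer the stability to the approximating metrics. As $\{\omega\}$ is a K\"{a}hler class on $X$, its pullback $\pi^*\{\omega\}$ is nef and big, and Lemma \ref{mov prod resol} (applied with trivial exceptional divisor) gives $\langle(\pi^*\{\omega\})^{n-1}\rangle = (\pi^*\{\omega\})^{n-1}$. By the projection formula the $\{\omega\}^{n-1}$-slope of a subsheaf of $\mathcal{E}$ equals the $(\pi^*\{\omega\})^{n-1}$-slope of its reflexive pullback, so the hypothesis that $\mathcal{E}$ is $\{\omega\}^{n-1}$-stable is equivalent to $\mathcal{F}$ being stable with respect to $(\pi^*\{\omega\})^{n-1}$. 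Since the K\"{a}hler classes $\{\omega_i\}$ converge to the nef and big class $\pi^*\{\omega\}$, the openness of stability under small perturbations of a nef and big class, in the form of \cite[Lemma 4.8]{Jin25-2}, shows that $\mathcal{F}$ is $\{\omega_i\}^{n-1}$-stable for all sufficiently large $i$.

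For each such $i$ the theorem of Bando and Siu on stable reflexive sheaves over a compact K\"{a}hler manifold produces an $\omega_i$-admissible Hermitian--Yang--Mills metric $h_i$ on $\mathcal{F}|_{Y\setminus\Sing(\mathcal{F})}$, with $\sqrt{-1}\Lambda_{\omega_i}F_{h_i} = \lambda_i\,\mathrm{Id}$ and finite $\omega_i$-energy, the constants $\lambda_i$ staying uniformly bounded because they are read off from the convergent slopes. The heart of the matter is to pass to the limit $i\to\infty$, and here I would follow the scheme of \cite[Proposition 4.13]{Jin25-2}. After a suitable determinant normalization one forms, from the family $h_i$ and a fixed smooth reference metric, a nonnegative quantity $u_i$ (essentially the logarithm of a trace comparison) for which the Hermitian--Yang--Mills equations yield a differential inequality $\Delta_{\omega_i}u_i \ge -N$ with $N$ independent of $i$; the uniform mean value inequality of Theorem \ref{sob ineq}$(4)$ then bounds $\sup_Y u_i$ by $\int_Y u_i\,\omega_i^n$, while the uniform Sobolev inequality of Theorem \ref{sob ineq}$(3)$ controls this integral. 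Combined with interior elliptic estimates and a diagonal argument, this produces a locally smooth limit $h_\infty$ on $X_{\reg}\setminus\Sing(\mathcal{E})$ solving $\sqrt{-1}\Lambda_{\omega}F_{h_\infty} = \lambda\,\mathrm{Id}$, and the uniform Chern--Weil bound $\int_Y |F_{h_i}|^2_{\omega_i}\,\omega_i^n \le C$ --- which holds because the curvature integrals reduce to intersection numbers against $\omega_i^{n-2}$ converging to those of $(\pi^*\{\omega\})^{n-2}$ --- passes to the limit by Fatou's lemma to give the finite-energy condition $\int_{X_{\reg}\setminus\Sing(\mathcal{E})}|F_{h_\infty}|^2_{\omega}\,\omega^n < \infty$.

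The main obstacle is to secure the uniform lower bound $\Delta_{\omega_i}u_i \ge -N$ and the uniform energy bound across the degeneration $\omega_i \to \pi^*\omega$: the approximating metrics collapse along the $\pi$-exceptional locus and degenerate towards $\Sing(X)$, where the $\omega_i$-trace of a fixed smooth reference curvature can blow up like $i$, so a naive comparison fails. Overcoming this --- by choosing the comparison quantity and cutoffs so that the bad contributions are absorbed, exactly as in \cite{Jin25-2} --- is where the independence of the constants in parts $(3)$ and $(4)$ of Theorem \ref{sob ineq} becomes indispensable, the classical Bando--Siu estimates being tied to a single fixed metric and not surviving the limit on their own. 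Granting these estimates, the verification that $h_\infty$ is genuinely $\omega$-admissible in the sense of the definition, rather than merely a weak subsequential limit, follows the scheme of \cite{Jin25-2} with the smooth K\"{a}hler metric there replaced throughout by the singular K\"{a}hler--Einstein metric $\omega$ and its tame approximation.
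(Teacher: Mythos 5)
Your overall scheme does match the paper's: resolve $X$, take the tame approximation $\omega_i$ from Theorem \ref{sob ineq}, transfer stability via \cite[Lemma 4.8]{Jin25-2}, solve HYM for each $\omega_i$, and pass to the limit using the uniform Sobolev/mean-value inequalities. However, there are two concrete problems. First, you set $\mathcal{F}:=\pi^{[*]}\mathcal{E}$ on an arbitrary resolution and invoke Bando--Siu, so your approximating HYM metrics $h_i$ are themselves only admissible, i.e.\ singular along $\Sing(\mathcal{F})$. The paper instead chooses a \emph{strong} resolution for which $E:=\pi^*\mathcal{E}/\Tor$ is locally free (always possible by flattening; and since $\pi^{[*]}\mathcal{E}=(\pi^*\mathcal{E}/\Tor)^{\vee\vee}$, this makes the reflexive pullback itself a vector bundle), and then applies Uhlenbeck--Yau \cite{UY86}, so that the endomorphisms $\Psi_i$ comparing $h_i$ with a fixed smooth reference $h_E$ are smooth on all of $Y$. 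This is not a cosmetic difference: the uniform mean value inequality, Theorem \ref{sob ineq}$(4)$, is stated for \emph{smooth} functions on the compact manifold $Y$, and with Bando--Siu metrics your comparison quantity $u_i$ is singular along $\Sing(\mathcal{F})$ with no uniform-in-$i$ control there, so the global estimate you rely on does not apply as stated. Your route forces you to handle two sources of degeneration (the collapsing metrics \emph{and} the sheaf singularities) where one suffices.

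Second, and more seriously, you correctly identify the crux --- the term $|\Lambda_{\omega_i}F_{h_E}|$ blows up along the $\pi$-exceptional locus as $\omega_i\to\pi^*\omega$ --- but you do not resolve it: ``choosing the comparison quantity and cutoffs\ldots exactly as in \cite{Jin25-2}'' defers precisely the step that constitutes this paper's modification of \cite{Jin25-2}. The actual mechanism is not a cutoff. One picks an effective $\pi$-exceptional divisor $D$ with $\pi^*\{\omega\}-[D]$ K\"ahler and a hermitian metric $h_D$ on $\mathcal{O}(D)$ so that $\pi^*\omega-\sqrt{-1}F_{h_D}+[D]$ is a K\"ahler current; this yields the uniform two-sided bound $\pm\sqrt{-1}F_{h_E}\le C\left(\omega_i+dd^c\log|s_D|_{h_D}^2\right)$ in the sense of currents, with $C$ independent of $i$, hence $\Delta_{\omega_i}\left(\log|\Psi_i|_{h_E}^2+C\log|s_D|^2\right)\ge -C$. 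The mean value inequality is then applied to this \emph{twisted} quantity, with the normalization taken for $s_D^N\Psi_i$, giving the sup bound $|s_D^N\Psi_i|\le C$ that feeds into the convergence machinery of \cite[Theorem 4.22]{Jin25-2}. Without this device (or an equivalent one), your claimed uniform differential inequality $\Delta_{\omega_i}u_i\ge -N$ is unsubstantiated, so the proposal as written has a genuine gap at the central estimate.
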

\begin{proof}
    Since the klt KE metric $\omega$ satisfies three conditions in Theorem \ref{sob ineq}, $\omega$ is a K\"{a}hler current on $X$ by Theorem \ref{sob ineq} $(1)$. 
    Let $\pi:Y\to X$ be a strong resolution of $X$, i.e., $\pi$ is a projective birational morphism such that $\pi:\pi^{-1}(X_{\reg})\to X_{\reg}$ is isomorphic and $E:=\pi^*\mathcal{E}/\Tor$ is locally free. Let $\omega_Y$ be a smooth K\"{a}hler metric on $Y$. Then, by Theorem \ref{sob ineq} $(2)$, there exists a sequence of smooth K\"{a}hler metrics $\omega_i$ in $\pi^*\{\omega\}+\frac{1}{i}\{\omega_Y\}$ which is a tame approximation of $\omega$ (Definition \ref{tame app}).
    Let $h_E$ be a smooth hermitian metric on $E$. 
    Here we know that $E$ is $\{\omega_i\}^{n-1}$-stable (e.g. \cite[Lemma 4.8]{Jin25-2}).
    Hence, by the Uhlenbeck-Yau theorem \cite{UY86}, we obtain a positive definite hermitian endomorphism $\Psi_i$ of $(E,h_E)$ such that $h_i=h_E\Psi_i^2$ gives an $\omega_i$-HYM metric on a holomorphic vector bundle $E$ and it defines
    an $\omega_i$-HYM connection $\nabla_i$ on the underlying complex hermitian vector bundle $(E,h_E)$ (refer to \cite[\S 4.2]{Jin25-2} for the detailed construction from an $\omega_i$-HYM metric on $E$ to an $\omega_i$-HYM connection on $(E,h_E)$). 
    By the standard computation (cf.\cite[Proposition 4.13, (4.1)]{Jin25-2}), we have
    \begin{equation}\label{laplacian ineq}
    \Delta_{\omega_i}\log|\Psi_i|^2_{h_E}
    \ge -|\Lambda_{\omega_i}F_{h_E}|_{h_E}-|\Lambda_{\omega_i}F_{\nabla_i}|_{h_E}.
    \end{equation}
    Since $\nabla_i$ is $\omega_i$-HYM, the second term in the RHS above is uniformly bounded by a constant. We estimate the first term. Let $D=\Sigma_{i=1}^ka_iD_i$ with $a_i>0$ be a $\pi$-exceptional effective divisor such that $\pi^*\{\omega\}-[D]$ is a K\"{a}hler class. Let $s_D$ be a defining section of $D$. Then we can find a smooth hermitian metric $h_D$ on $\mathcal{O}(D)$ such that $\pi^*\omega+dd^c\log|s_D|_{h_D}^2=\pi^*\omega-\sqrt{-1}F_{h_D}+[D]$ defines a K\"{a}hler current on $Y$. Hence there is a constant $C>0$ which is independent of $i$ such that
    $$
    -C(\omega_i+dd^c\log|s_D|^2)
    \le \sqrt{-1}F_{h_E} 
    \le C(\omega_i+dd^c\log|s_D|^2).
    $$
    in the sense of currents.
    Therefore we obtain
    $$
    -C(n+\Delta_{\omega_i}\log|s_D|^2)
    \le \sqrt{-1}\Lambda_{\omega_i}F_{h_E}
    \le C(n+\Delta_{\omega_i}\log|s_D|^2).    
    $$
    and hence $|\Lambda_{\omega_i}F_{h_E}|\le C(n+\Delta_{\omega_i}\log|s_D|^2)$. By substituting to (\ref{laplacian ineq}), we obtain
    $$
    \Delta_{\omega_i}(\log|\Psi_i|_{h_E}^2+C\log|s_D|^2)
    \ge -C.
    $$
    Then, by Theorem \ref{sob ineq} $(4)$, we obtain that
    $$
    \log|\Psi_i|^2+C\log|s_D|^2\le C
    $$
    by normalizing $\Psi_i$ so that $\int_X\log|s_D^N\Psi_i|^2\omega_i^n=1$ where we choose $N:=C\in\mathbb{Z}$. In particular we have
    \begin{equation}\label{sup estimate}
        |s_D^N\Psi_i|\le C.
    \end{equation}
    The pullback current $\pi^*\omega$ satisfies the complex Monge-Amp\`{e}r{e} equation $(\pi^*\omega)^n=e^F\omega_Y$ and $e^F$ has at most pole along the $\pi$-exceptional divisor: $e^F=e^f|s_{D_1}|^{2p_1}\cdots|s_{D_k}|^{2p_k}$ where $f\in C^{\infty}(Y)$ and $p_i>0$. Furthermore $\pi^*\omega\ge |s_D|^2\omega_Y$.
    In particular, $\Psi_i$ and $\omega_i$ satisfies \cite[Assumption 4.14]{Jin25-2}. Then, the arguments following Proposition 4.15 in \cite{Jin25-2} applies by replacing $\Psi_i$ to $s_D^N\Psi_i$ with sufficiently large $N>0$.
    Therefore, by Theorem \cite[Theorem 4.22]{Jin25-2}, the above (\ref{sup estimate}) concludes the existence of $\pi^*\omega$-admissible HYM metric on $E$. 
    In particular, if we denote by $\nabla$ the Uhlenbeck limit of $\nabla_i$, then $\overline{\partial}^{\nabla}$ gives a holomorphic structure on the complex vector bundle $E|_{Y\setminus D}$ and $\Psi_{\infty}$, the $L^2$-local limit of $\Psi_i$, gives a holomorphic isomorphism from $(E,\overline{\partial}^{E})$ to $(E,\overline{\partial}^{\nabla})$ over $Y\setminus D$.
\end{proof}

\section{Structure theorem when MY equality holds for big anticanonical K-stable varieties}

\begin{defn}\cite[Definition 3.2 and Proposition 3.7]{GKP22}
\label{defn-MY-1}
Let $X$ be a connected complex manifold, and let $\mathcal{F}$ be a locally free coherent sheaf of rank $r$. We say that $\mathcal{F}$ is \emph{projectively flat} if the following equivalent conditions hold:
\begin{enumerate}
\item There exists a representation $\pi_1(X) \to PGL(r+1,\C)$ such that the associated projective bundle satisfies $\mathbb{P}(\mathcal{F}) \cong \mathbb{P}_{\rho}$, where
$$
 \mathbb{P}_{\rho} \cong X_{\mathrm{univ}} \times \mathbb{P}^r / \pi_{1}(X),
$$
and $X_{\mathrm{univ}}$ denotes the universal cover of $X$. 
\item The sheaf $\End(\mathcal{F})$ admits a flat connection. 
\item The sheaf $\Sym^r \mathcal{F} \otimes \det(\mathcal{F})^{\vee}$ admits a flat connection.
\end{enumerate}
\end{defn}
Let $X$ be a compact normal Moishezon variety and $\alpha\in H^{1,1}_{BC}(X)$ be a big class. Let $\mathcal{E}$ be a reflexive sheaf on $X$. Let us fix a proper bimeromorphic morphism $\pi:Y\to X$ such that $Y$ is smooth K\"{a}hler and $E:=\pi^*\mathcal{E}/\Tor$ is locally free. Following to \cite[Proposition 4.16]{IJZ25}, we define
\begin{equation}\label{def of int prod}
c_1(\mathcal{E})^2\cdot\langle\alpha^{n-2}\rangle:=c_1(E)\cdot\langle(\pi^*\alpha)^{n-2}\rangle, \hbox{ } c_2(\mathcal{E})\cdot\langle\alpha^{n-2}\rangle:= c_2(E)\cdot\langle(\pi^*\alpha)^{n-2}\rangle.
\end{equation}
By \cite[Proposition 4.16]{IJZ25}, the above definition is independent of choices of resolutions $\pi$.
\begin{prop}[cf. \cite{Chen25},\cite{IJZ25}]
\label{prop-semistable}
Let $X$ be a compact normal Moishezon variety smooth in codimension $2$. Let $\mathcal{E}$ be a reflexive sheaf of rank $r$ and $\alpha \in H^{1,1}_{BC}(X)$ be a big class. Suppose that $\mathcal{E}$ is $\langle\alpha^{n-1}\rangle$-slope semistable. Then the Bogomolov-Gieseker inequality
\begin{equation}
\label{eq-BG}
\bigl(2rc_2(\mathcal{E}) - (r-1)c_1(\mathcal{E})^2 \bigr) \cdot \langle\alpha^{n-2}\rangle \ge 0 
\end{equation}
holds. Moreover, if $\alpha$ is big and semiample, $\mathcal{E}$ is $\alpha^{n-1}$-slope stable and the equality in \eqref{eq-BG} holds, then $\mathcal{E}$ is locally free and projectively flat on $X_{\reg} \cap \Amp(\alpha)$.
\end{prop}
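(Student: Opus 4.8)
The plan is to separate the two assertions: the inequality \eqref{eq-BG} is the semistable Bogomolov--Gieseker bound and is essentially known, while the equality statement is the new content and rests on a Kobayashi--Hitchin/Chern--Weil argument. For the inequality itself I would cite the semistable case of \cite{IJZ25}; alternatively one reduces to the stable case by taking a Jordan--H\"older filtration of $\mathcal{E}$ relative to $\langle\alpha^{n-1}\rangle$, applying the stable bound to each graded piece (all of slope $\mu_{\langle\alpha^{n-1}\rangle}(\mathcal{E})$) and summing, the equality of slopes forcing the cross terms to drop out. Throughout, the relevant intersection numbers are the well-defined positive-product quantities of \eqref{def of int prod}, which are continuous on the big cone; this continuity is what allows one to pass from K\"ahler perturbations of $\alpha$ to $\alpha$ itself.

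\emph{Producing an admissible HYM metric.} Assume now $\alpha$ big and semiample, $\mathcal{E}$ $\alpha^{n-1}$-stable, and equality in \eqref{eq-BG}; set $U:=X_{\reg}\cap\Amp(\alpha)$. The first step is to build an $\alpha$-admissible Hermitian--Yang--Mills metric $h$ on $\mathcal{E}|_U$. Since $\alpha$ is semiample it is nef, so for a K\"ahler class $\omega$ the classes $\alpha_\varepsilon:=\alpha+\varepsilon\omega$ are K\"ahler, and by openness of stability \cite[Lemma 4.8]{Jin25-2} the sheaf $\mathcal{E}$ remains $\alpha_\varepsilon^{n-1}$-stable for all small $\varepsilon>0$. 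Running the Uhlenbeck--Yau construction on a strong resolution $\pi:Y\to X$ exactly as in the proof of Theorem \ref{lt HE}, with $E=\pi^*\mathcal{E}/\Tor$ locally free, yields $\alpha_\varepsilon$-admissible HYM metrics $h_\varepsilon$, and I would then extract a limit $h$ as $\varepsilon\to0$. Here the semiample hypothesis is essential: after pullback, $\pi^*\alpha$ admits a Zariski decomposition $\pi^*\alpha=\pi^*(\text{ample class on the model})+[D]$, so Lemma \ref{mov prod resol} makes the positive-product computations explicit and, more importantly, provides the control near $E_{nK}(\alpha)$ needed to pass to the limit while keeping the curvature of $h$ in $L^2$, that is $\int_U|F_h|^2_{h,\alpha}\,\alpha^n<\infty$.

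\emph{The Chern--Weil identity and conclusion.} Write $F_h$ for the curvature of $h$ and $F_h^0:=F_h-\tfrac1r(\operatorname{tr}F_h)\,\mathrm{Id}$ for its trace-free part. The identity $2rc_2-(r-1)c_1^2=\tfrac{r}{4\pi^2}\operatorname{tr}(F_h^0\wedge F_h^0)$ of Chern forms, wedged with $\alpha^{n-2}$, integrated over $U$, and combined with the HYM condition $\Lambda_\alpha F_h^0=0$, gives
\begin{equation*}
\bigl(2rc_2(\mathcal{E})-(r-1)c_1(\mathcal{E})^2\bigr)\cdot\langle\alpha^{n-2}\rangle = c\int_U|F_h^0|^2_{h,\alpha}\,\alpha^n
\end{equation*}
for a constant $c>0$. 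The equality hypothesis then forces $F_h^0\equiv0$ on $U$, so the connection induced on $\mathbb{P}(\mathcal{E})|_U$ is flat and $\mathcal{E}$ is projectively flat there. Finally, because $X$ is smooth in codimension $2$, the set $\Sing(\mathcal{E})$ has codimension at least $3$; the flat projective connection on $U\setminus\Sing(\mathcal{E})$ extends across $\Sing(\mathcal{E})\cap U$, which upgrades $\mathcal{E}$ to a locally free, projectively flat sheaf on all of $U$.

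\emph{Main obstacle.} The crux is the analytic construction and limiting of $h$ for a merely big and semiample $\alpha$: unlike the K\"ahler--Einstein setting of Theorem \ref{lt HE}, the uniform Sobolev and mean-value inequalities of Theorem \ref{sob ineq} are not available here, so the estimates that guarantee convergence of both the Uhlenbeck limit and the $\varepsilon\to0$ limit, and in particular the finite $L^2$ curvature that legitimizes the integration by parts in the Chern--Weil identity over the noncompact, singular locus $U$, must be extracted from the approximate Zariski decomposition of $\pi^*\alpha$ and from control of the singularities of $X$ along $E_{nK}(\alpha)$.
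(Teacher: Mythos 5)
Your overall architecture --- perturb $\alpha$ to K\"ahler classes, run Uhlenbeck--Yau, pass to a limiting admissible HYM object, and conclude by Chern--Weil --- is the same as the paper's, which follows \cite{Chen25}. But the pivotal step of your argument has a genuine gap: you assert the exact identity
\[
\bigl(2rc_2(\mathcal{E})-(r-1)c_1(\mathcal{E})^2\bigr)\cdot\langle\alpha^{n-2}\rangle \;=\; c\int_U|F_h^0|^2_{h,\alpha}\,\alpha^n ,
\qquad U=X_{\reg}\cap\Amp(\alpha),
\]
i.e.\ that the Chern--Weil integral of the \emph{limit} metric over the noncompact locus $U$ computes the cohomological intersection number. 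This is precisely what cannot simply be asserted: for connections obtained as Uhlenbeck-type limits, curvature can concentrate along $E_{nK}(\alpha)$, $X_{\sing}$ and $\Sing(\mathcal{E})$, so in general the integral over $U$ only bounds the intersection number from one side, the discrepancy being a nonnegative ``lost mass''. Finite $L^2$ curvature of the limit does not exclude this (it is exactly the bubbling phenomenon), and no integration by parts over $U$ fixes it. In effect you have assumed the hard analytic statement rather than proved it; your closing paragraph acknowledges the difficulty but offers no mechanism to resolve it.

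The paper never claims this identity, and that is the whole point of its argument. Chern--Weil is applied only for each fixed $\varepsilon>0$ on the \emph{compact} resolution, where it is valid:
\[
\int_X d\mu_\varepsilon=\bigl(2rc_2(\mathcal{E})-(r-1)c_1(\mathcal{E})^2\bigr)\cdot\{\omega_\varepsilon\}^{n-2},
\qquad d\mu_\varepsilon=\bigl(2rc_2(\nabla_\varepsilon)-(r-1)c_1(\nabla_\varepsilon)^2\bigr)\wedge\omega_\varepsilon^{n-2}.
\]
Because each $\nabla_\varepsilon$ is $\omega_\varepsilon$-HYM, these densities are pointwise nonnegative; combining the local smooth convergence $\nabla_\varepsilon\to\nabla$ modulo unitary gauge (via \cite[Corollary 48]{CW22}, from the uniform $L^2$ curvature bound) with Fatou's lemma then gives only the one-sided estimate
\[
0=\bigl(2rc_2(\mathcal{E})-(r-1)c_1(\mathcal{E})^2\bigr)\cdot\alpha^{n-2}
=\lim_{\varepsilon\to0}\int_X d\mu_\varepsilon
\;\ge\;\int_{\Amp(\pi^*\omega)} d\mu\;\ge\;0,
\]
which suffices: the vanishing of the left-hand side forces $d\mu\equiv0$, hence $F^0_\nabla\equiv0$, with no need to rule out mass loss. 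So the repair of your proof is to replace your exact identity by this Fatou inequality; the nonnegativity of the HYM Bogomolov--Gieseker density is what makes a one-sided comparison enough. (Two lesser remarks: the paper fixes one background metric $h_0$ and varies the HYM \emph{connections} $\nabla_\varepsilon$, which is what makes the compactness theorem of \cite{CW22} directly applicable, rather than varying metrics $h_\varepsilon$ as you do; and your final step extending local freeness and projective flatness across $\Sing(\mathcal{E})\cap U$ is asserted rather than argued, though that is the more minor issue.)
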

\noindent Here a cohomology class $\alpha$ is big and semiample if there is a sequence of blowups $\pi:X\to Y$ where $Y$ is compact normal K\"{a}hler variety and a K\"{a}hler class $\omega\in H^{1,1}_{BC}(Y)$ on $Y$ such that $\alpha=\pi^*\omega$.
\begin{proof}
    The inequality (\ref{eq-BG}) is a result of \cite{IJZ25}. The equality case follows from the argument of \cite{Chen25} as follows: By \cite[Proposition 4.16]{IJZ25}, we can assume that $X$ is a smooth K\"{a}hler manifold and $\mathcal{E}$ is locally free. We fix a smooth hermitian metric $h_0$ on $\mathcal{E}$. Let $Y$ be a compact normal analytic variety with a smooth K\"{a}hler metric $\omega$ and a composition of blowups $\pi:X\to Y$ such that $\alpha=\{\pi^*\omega\}$. Let $\omega_X$ be a smooth K\"{a}hler metric on $X$. Then $\mathcal{E}$ is slope stable with respect to $\omega_{\varepsilon}=\pi^*\omega+\varepsilon\omega_X$. Let $\nabla_{\varepsilon}$ be an $\omega_{\varepsilon}$-HYM connection on the complex hermitian vector bundle $(\mathcal{E}, h_0)$. Since $\int_X|F_{\nabla_{\varepsilon}}|_{h_0,\omega_{\varepsilon}}^2\omega_{\varepsilon}^n\le C$, there exists a $\pi^*\omega$-admissible HYM connection $\nabla$ of $(\mathcal{E}, h_0)|_{\Amp(\pi^*\omega)}$ such that $\nabla_{\varepsilon}$ locally smoothly converges to $\nabla$ up to $h_0$-unitary transformations as in the proof of Theorem \ref{lt HE} (see also \cite[Corollary 48]{CW22}). Since a sequence of Radon measures $d\mu_{\varepsilon}=(2rc_2(\nabla_{\varepsilon})-(r-1)c_1(\nabla_{\varepsilon})^2)\wedge \omega_{\varepsilon}^{n-2}$ converges to a Radon measure $d\mu=(2rc_2(\nabla)-(r-1)c_1(\nabla)^2)\wedge \pi^*\omega^{n-2}$, Fatou's lemma shows that
    \begin{align*}
        &(2rc_2(\mathcal{E})-(r-1)c_1(\mathcal{E})^2)\cdot\alpha^{n-2}\\
        &=\lim_{\varepsilon\to0}\int_X(2rc_2(\nabla_{\varepsilon})-(r-1)c_1(\nabla_{\varepsilon})^2)\wedge \omega_{\varepsilon}^{n-2}\\
        &\ge\int_{\Amp(\pi^*\omega)}(2rc_2(\nabla)-(r-1)c_1(\nabla)^2)\wedge \pi^*\omega^{n-2}\ge 0.
    \end{align*}
    Since the LHS is 0, the connection $\nabla$ is projectively flat. Thus $\mathcal{E}$ is projectively flat on $\Amp(\alpha)$.
\end{proof}
 In \cite{IJZ25}, the authors proved the following:
\begin{theo}[\cite{IJZ25} Theorem 1.1]
Let $X$ be an $n$-dimensional projective klt variety with big anti-canonical divisor $-K_X$ that is $K$-semistable. Then  the following Miyaoka-Yau inequality holds:
$$
\big(2(n+1)\widehat{c_2}(X)-n\widehat{c_1}(X)^2\big)\cdot\langle c_1(-K_X)^{n-2}\rangle\ge 0.
$$
Here $\widehat{c_i}(X)$ is the orbifold Chern classes of $X$. If $X$ is smooth in codimension 2, they coincide with usual Chern classes $c_i(X)$.
\end{theo}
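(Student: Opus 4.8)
The plan is to realize the Miyaoka--Yau polynomial as the Bogomolov--Gieseker discriminant of a rank $n+1$ reflexive sheaf and then invoke Proposition \ref{prop-semistable}. Working on $X_{\reg}$ and taking reflexive hulls, I would form the Atiyah extension of $\mathcal{O}_X(-K_X)$: dualizing the twisted first-jet sequence $0 \to \Omega^1_X \to J^1(\mathcal{O}_X(-K_X))\otimes\mathcal{O}_X(K_X) \to \mathcal{O}_X \to 0$ produces a reflexive sheaf $\mathcal{E}$ of rank $n+1$ fitting in
$$
0 \to \mathcal{O}_X \to \mathcal{E} \to T_X \to 0,
$$
whose extension class is $c_1(-K_X)=c_1(X)$. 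By the Whitney formula $c(\mathcal{E})=c(T_X)$, so $c_1(\mathcal{E})=c_1(X)$ and $c_2(\mathcal{E})=c_2(X)$; computing against $\langle c_1(-K_X)^{n-2}\rangle$ through \eqref{def of int prod}, the rank-$(n+1)$ discriminant $\bigl(2(n+1)c_2(\mathcal{E})-n\,c_1(\mathcal{E})^2\bigr)\cdot\langle c_1(-K_X)^{n-2}\rangle$ is exactly the asserted Miyaoka--Yau quantity. The Euler sequence is the model case on $\mathbb{C}P^n$, where $\mathcal{E}=\mathcal{O}(1)^{\oplus(n+1)}$ is polystable and equality holds; this also explains the projective-flatness phenomenon in the equality case.

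It then suffices to prove that $\mathcal{E}$ is $\langle c_1(-K_X)^{n-1}\rangle$-slope semistable, for then Proposition \ref{prop-semistable} applied to the big class $\alpha=c_1(-K_X)$ delivers the inequality. I would argue in two stages. First, the hypothesis that $X$ is $K$-semistable forces $T_X$ to be $\langle c_1(-K_X)^{n-1}\rangle$-slope semistable; this is the standard bridge between $K$-semistability of a Fano-type variety and slope semistability of its tangent sheaf with respect to $-K_X$, which in the present big (non-Fano) setting I would access through the anticanonical model $Z$. Second, semistability passes from $T_X$ to $\mathcal{E}$: given a saturated destabilizing $\mathcal{S}\subset\mathcal{E}$ with image $\overline{\mathcal{S}}\subset T_X$, the case $\mathcal{O}_X\subset\mathcal{S}$ is excluded by the slope comparison $\tfrac{s}{n(s+1)}\le\tfrac{1}{n+1}$ for $0\le s=\rk\overline{\mathcal{S}}\le n$ (using $\langle c_1(-K_X)^n\rangle>0$), while the splitting case $\mathcal{S}\xrightarrow{\ \sim\ }\overline{\mathcal{S}}$ would force the Atiyah class $c_1(X)$ to restrict to zero on $\overline{\mathcal{S}}$, which is ruled out by the bigness of $c_1(-K_X)$. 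Alternatively, and in the spirit of the rest of the paper, the singular Kähler--Einstein metric makes $T_X$ admissibly Hermitian--Yang--Mills through $\sqrt{-1}\Lambda_\omega F_{T_X}=\lambda\,\mathrm{Id}$, and the Atiyah extension inherits an admissible HYM metric precisely because its extension class is the Ricci class, so polystability of $\mathcal{E}$ follows from the admissible Kobayashi--Hitchin correspondence underlying Theorem \ref{lt HE}.

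The passage from ample to big $-K_X$ is absorbed by the positive-product formalism. Since $-K_X$ is big, on a common resolution $\mu\colon\widetilde{X}\to X$ of the anticanonical map, with induced morphism $\nu\colon\widetilde{X}\to Z$, one has $\mu^*c_1(-K_X)=\nu^*c_1(-K_Z)+[D]$ with $-K_Z$ ample and $D$ effective $\nu$-exceptional; Lemma \ref{mov prod resol} then identifies every positive product $\langle c_1(-K_X)^k\rangle$ with the genuine intersection number of the ample class $c_1(-K_Z)$, which legitimizes feeding $\alpha=c_1(-K_X)$ into Proposition \ref{prop-semistable} and keeps all the slope inequalities above well defined. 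A genuinely klt $X$ that is not smooth in codimension $2$ is handled by reinterpreting every Chern number as an orbifold Chern number $\widehat{c_i}(X)$ built from the $\mathbb{Q}$-sheaf structure, which is exactly the role of the $\widehat{c_i}$ in the statement. I expect the main obstacle to be the first stage: extracting slope semistability of $T_X$ from $K$-semistability when $-K_X$ is merely big, since the clean dictionary between destabilizing subsheaves and Donaldson--Futaki invariants is transparent in the Fano (ample) case and must here be transported to $X$ through the anticanonical model and the positive-product intersection theory.
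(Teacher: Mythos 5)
Your overall architecture is exactly the one behind the quoted theorem (which this paper imports from \cite{IJZ25} without reproving; the mechanism is visible in the proof of Theorem \ref{BG ineq}): realize $2(n+1)c_2(X)-nc_1(X)^2$ as the rank-$(n+1)$ Bogomolov--Gieseker discriminant of the canonical extension $0\to\mathcal{O}_X\to\mathcal{E}_X\to\mathcal{T}_X\to0$ of \cite[Section 4]{GKP22}, pass to the anticanonical model $Z$ of \cite{Xu23} and identify $\langle c_1(-K_X)^k\rangle$ with $q^*c_1(-K_Z)^k$ via Lemma \ref{mov prod resol}, then apply Proposition \ref{prop-semistable}. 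However, your semistability step contains a genuine gap. You argue in two stages: (i) K-semistability gives slope semistability of $\mathcal{T}_X$, and (ii) semistability passes from $\mathcal{T}_X$ to $\mathcal{E}_X$. Stage (ii) is false as argued. In the extension, the sub $\mathcal{O}_X$ has slope $0$ while the quotient has slope $\mu(\mathcal{T}_X)=\tfrac{1}{n}\langle c_1(-K_X)^n\rangle>\tfrac{1}{n+1}\langle c_1(-K_X)^n\rangle=\mu(\mathcal{E}_X)$, so this is an extension whose \emph{quotient} strictly destabilizes numerically; semistability of the two extremes never formally implies semistability of the middle in this situation. Concretely, the dangerous subsheaves are exactly those $\mathcal{S}\subset\mathcal{E}_X$ with $\mathcal{S}\cap\mathcal{O}_X=0$, i.e.\ liftable subsheaves $\overline{\mathcal{S}}\subset\mathcal{T}_X$ with $\mu(\overline{\mathcal{S}})\in\bigl(\mu(\mathcal{E}_X),\mu(\mathcal{T}_X)\bigr]$, and semistability of $\mathcal{T}_X$ permits these. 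Your proposed exclusion --- that lifting forces the Atiyah class to restrict to zero on $\overline{\mathcal{S}}$, ``ruled out by bigness of $c_1(-K_X)$'' --- only works for $\overline{\mathcal{S}}=\mathcal{T}_X$ (where vanishing means the sequence splits, contradicting $c_1(X)\neq 0$); for a proper subsheaf, vanishing of the induced class in $H^1(X,\overline{\mathcal{S}}^{\vee})$ is in no conflict with bigness, and nothing in your argument prevents it.

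What actually closes this gap in the literature is a direct theorem, not a transfer from $\mathcal{T}_X$: K-(semi/poly)stability of the Fano model yields slope (semi/poly)stability of the canonical extension \emph{itself} --- this is precisely what the present paper invokes in Claim \ref{claim-locallyfree} (``by \cite{Jin25} (see also \cite[Theorem 1.2]{IJZ25}), $\mathcal{E}_X$ is $\langle c_1(-K_X)^{n-1}\rangle$-polystable'', resting on \cite[Remark 4 in \S 2, Theorem 9 in \S 3]{DGP24} for the Fano model). The K-semistability hypothesis sees the extension structure through Donaldson--Futaki/$\beta$-invariant computations for degenerations induced by filtrations of $\mathcal{E}_X$, which is strictly more than semistability of $\mathcal{T}_X$. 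Your analytic fallback also fails: a K-\emph{semistable} klt variety with big $-K_X$ need not admit a singular K\"ahler--Einstein metric (that requires K-polystability), and Theorem \ref{lt HE} moreover requires stability, not semistability, so the ``admissible HYM on $\mathcal{T}_X$, inherited by the Atiyah extension'' route is unavailable under the stated hypotheses. Finally, note that the theorem is stated with orbifold Chern classes $\widehat{c_i}(X)$ for general klt $X$; your one-line reduction ``reinterpret every Chern number as an orbifold Chern number'' glosses over the orbifold-locus machinery that \cite{IJZ25} needs when $X$ is not smooth in codimension $2$, though this is a secondary issue compared with the semistability gap.
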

In the following Theorem \ref{BG ineq}, we show that the structure theorem holds when the equality of the Miyaoka-Yau inequality holds.

\begin{thm}[{cf. \cite{His24}}]\label{BG ineq}
Let $X$ be an $n$-dimensional projective klt variety smooth in codimension $2$. Assume that $X$ is K-stable and that $-K_X$ is big. If the equality
\begin{equation}
\label{eq-MY-equality}
\bigl(2(n+1)c_2(X) - nc_1(X)^2 \bigr) \cdot \langle c_1(-K_{X})^{n-2} \rangle = 0
\end{equation}
holds, then the anticanonical model $Z$ admits a quasi-étale cover from $\mathbb{C}\mathbb{P}^n$. $($In this case, by \cite{Xu23}, the anticanonical model $Z$ exists.$)$
\end{thm}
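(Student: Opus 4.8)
The plan is to reduce the problem to the anticanonical model $Z$, where $-K_Z$ is ample, and there to combine the singular Kähler--Einstein metric, the admissible HYM theory of Theorem~\ref{lt HE}, and the Chern--Weil technique already used in the proof of Proposition~\ref{prop-semistable}.

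First I would pass to $Z$. Since $-K_X$ is big and $X$ is K-stable klt, by \cite{Xu23} the anticanonical model $Z$ exists, is klt Fano with $-K_Z$ ample, and there is a birational contraction $\pi\colon X\to Z$ with $c_1(-K_X)=\pi^*c_1(-K_Z)+[D]$ for an effective $\pi$-exceptional divisor $D$; in particular $\alpha:=c_1(-K_X)$ is big and semiample in the sense of Proposition~\ref{prop-semistable}, with $\Amp(\alpha)$ mapping isomorphically onto its image in $Z$. By Lemma~\ref{mov prod resol} the positive product $\langle c_1(-K_X)^{n-2}\rangle$ is computed by the honest intersection class $(\pi^*c_1(-K_Z))^{n-2}$, and since $X$ is smooth in codimension two the characteristic numbers in \eqref{eq-MY-equality} are controlled away from the codimension $\ge 2$ loci; this lets me read the Miyaoka--Yau number as a number attached to the ample class on $Z$. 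As the anticanonical rings of $X$ and $Z$ coincide, $Z$ is again K-stable, so it suffices to produce the quasi-étale cover of $Z$.

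Next I would bring in the metric and the bundle. K-stability of the klt Fano $Z$ yields, through the singular Yau--Tian--Donaldson correspondence, a singular Kähler--Einstein metric $\omega$ with $\mathrm{Ric}(\omega)=\omega\in c_1(-K_Z)$, and forces the reflexive tangent sheaf $T_Z$ to be slope stable with respect to $c_1(-K_Z)$. By Theorem~\ref{lt HE} this stable sheaf carries an $\omega$-admissible HYM metric, which by uniqueness of HYM metrics (up to scaling) must coincide with the metric induced by $\omega$ itself, since a Kähler--Einstein metric is exactly an HYM metric on the tangent sheaf. Running the tame approximation and the uniform estimates of Theorem~\ref{sob ineq} together with the $L^2$-curvature control of the admissible framework, exactly as in the proof of Proposition~\ref{prop-semistable}, I would rewrite the left-hand side of \eqref{eq-MY-equality} as a convergent Chern--Weil integral of the KE curvature over the good locus $Z_{\reg}\cap\Amp(\alpha)$. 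The crucial input is the pointwise Miyaoka--Yau identity for a Kähler--Einstein metric,
\[
\bigl(2(n+1)c_2-nc_1^2\bigr)\wedge\omega^{n-2}=C_n\,\bigl|\mathring{R}\bigr|_\omega^2\,\omega^n\ge 0,
\]
where $\mathring{R}$ is the trace-free part of the curvature of $T_Z$ and $C_n>0$; Fatou's lemma then shows, as in Proposition~\ref{prop-semistable}, that the algebraic Miyaoka--Yau number dominates this nonnegative integral.

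Since \eqref{eq-MY-equality} forces that number to vanish, the displayed integrand vanishes identically on the good locus, so $\mathring{R}\equiv0$ there; equivalently $\omega$ has constant holomorphic sectional curvature, necessarily positive because $\mathrm{Ric}(\omega)=\omega>0$. Finally I would uniformize: a positive complex space-form structure on a smooth Zariski-open subset of the klt variety $Z$ develops to $\mathbb{C}\mathbb{P}^n$, and extending the developing map and its monodromy representation across the (codimension $\ge 3$) singular set, using that $Z$ is klt together with the standard uniformization and extension machinery for such spaces, produces the desired quasi-étale cover $\mathbb{C}\mathbb{P}^n\to Z$. The main obstacle is twofold. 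Analytically, the delicate point is the Chern--Weil matching of the third paragraph: showing that the algebraically defined Miyaoka--Yau number, built from positive products on a resolution as in \eqref{def of int prod}, equals the $L^2$ Chern--Weil integral of the singular KE connection over the good locus, with no hidden contribution from the non-locally-free locus, the exceptional and non-Kähler locus of $\pi$, or the codimension $\ge 3$ singularities. Geometrically, the final step---promoting constant holomorphic sectional curvature on an open locus to an honest quasi-étale cover by $\mathbb{C}\mathbb{P}^n$, that is, controlling the local fundamental groups and extending the uniformizing map across the klt singularities---is where the klt hypothesis must be used in full.
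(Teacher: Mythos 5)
Your proposal follows the classical differential-geometric route (Yau's argument: K\"ahler--Einstein metric $+$ MY equality $\Rightarrow$ constant holomorphic sectional curvature $\Rightarrow$ uniformization), which is genuinely different from the paper's proof --- but the two steps you yourself flag as ``the main obstacle'' are not technicalities; they are exactly the hard content, and as stated some of them would fail. Concretely: (i) K-stability of the klt Fano $Z$ gives only \emph{polystability} of $\mathcal{T}_Z$ with respect to $c_1(-K_Z)$, not stability, so Theorem \ref{lt HE} does not apply to $\mathcal{T}_Z$ as you invoke it. (ii) Your reduction ``read the Miyaoka--Yau number as a number attached to the ample class on $Z$'' is unjustified and is likely false in the form you need: the anticanonical model $Z$ need \emph{not} be smooth in codimension $2$ even though $X$ is, so the Chern--Weil integral of the singular KE metric over $Z_{\reg}$ computes \emph{orbifold} Chern numbers of $Z$, which can differ from the intersection number \eqref{eq-MY-equality} defined on (a resolution of) $X$ by codimension-$2$ orbifold corrections; in addition, finiteness of the $L^2$ norm of the full curvature of a singular KE metric on $Z_{\reg}$ is not available. (iii) The final step --- promoting a positive space-form structure on a Zariski-open subset of $Z_{\reg}$ to a finite quasi-\'etale cover $\mathbb{C}\mathbb{P}^n \to Z$ via a developing map --- has no off-the-shelf extension machinery across klt singularities, and would at minimum require finiteness of the monodromy representation of $\pi_1(Z_{\reg})$, which you do not address.

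The paper's proof is engineered precisely to avoid all three issues. It never works with $\mathcal{T}_Z$ or with the singular KE metric on $Z$: instead it uses the canonical extension sheaf $\mathcal{E}_X$ of rank $n+1$, whose Bogomolov--Gieseker discriminant is \emph{exactly} the Miyaoka--Yau class in \eqref{eq-MY-equality}, and which is $\langle c_1(-K_X)^{n-1}\rangle$-polystable by \cite{IJZ25} (and $\mathcal{E}_Z$ is polystable by \cite{DGP24}). The equality case of the BG inequality (Proposition \ref{prop-semistable}) is proved by analysis on a \emph{smooth} resolution with approximating K\"ahler metrics $\pi^*\omega + \varepsilon\omega_X$ and a Fatou argument --- so the Chern--Weil matching on the singular space you worry about is never needed; one only needs the one-sided inequality plus vanishing of the algebraic number. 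Projective flatness of $p^*\mathcal{E}_X/\tor$ on the ample locus is then transported birationally (via \eqref{eq-contraction} and Lemma \ref{mov prod resol}) to projective flatness of $\mathcal{E}_Z$ on $Z_{\reg}$, which also sidesteps the possible codimension-$2$ singularities of $Z$. Finally, instead of a developing map, the cover is produced algebraically: pass to the maximal quasi-\'etale cover $\widehat{Z}\to Z$ of \cite{GKP16}, extend the flat sheaf $\End(\mathcal{E}_{\widehat{Z}})$ over all of $\widehat{Z}$, deduce local freeness of $\mathcal{T}_{\widehat{Z}}$ hence smoothness of $\widehat{Z}$ by \cite{GKKP11}, and conclude $\widehat{Z}\cong\mathbb{C}\mathbb{P}^n$ by Kobayashi--Ochiai \cite{KO73}. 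To salvage your approach you would need to supply substitutes for each of (i)--(iii), which amounts to a different (and currently unavailable) set of theorems.
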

We recall the canonical extension sheaf. 
Let $X$ be a normal analytic variety, and assume that $K_X$ is $\Q$-Cartier. Then, by \cite[Section~4]{GKP22} (see also \cite[Subsection~5.2]{IJZ25}), there exists a reflexive sheaf $\mathcal{E}_{X}$, called the \emph{canonical extension sheaf}, such that the following short exact sequence 
\begin{equation}
\label{eq-canoincal-extension}
0 \to \mathcal{O}_{X} \to \mathcal{E}_{X} \to \mathcal{T}_{X} \to 0
\end{equation}
is locally split, where $\mathcal{T}_{X}$ is the holomorphic tangent sheaf, i.e., the dual of the reflexive cotangent sheaf $\Omega_{X}^{[1]}$.

\begin{proof}
This proof is motivated by \cite{GKP22}. We use the same notation as in \cite[Subsection~5.2]{IJZ25}. Since $X$ is K-stable, it follows from \cite[Theorems 1.1, 1.2, and Corollary 3.5]{Xu23} that the anticanonical model $Z$ is a K-stable klt Fano variety. Moreover, by \cite[Remark 4 in \S 2 and Theorem 9 in \S 3]{DGP24}, the canonical extension sheaf $\mathcal{E}_Z$ is $c_1(-K_Z)^{n-1}$-polystable.

Let $f \colon X \dashrightarrow Z$ be the natural birational map, and take a resolution $W$ with morphisms $p \colon W \to X$ and $q \colon W \to Z$ resolving $f$:
$$
\xymatrix@C=40pt@R=30pt{
& W \ar[ld]_p \ar[rd]^q & \\
X \ar@{-->}[rr]^f && Z
}
$$
By the argument of \cite[Subsection~5.2]{IJZ25}, there exists an effective $q$-exceptional divisor $B$ on $W$ such that
\begin{equation}
\label{eq-contraction}
p^*(-K_X) = q^*(-K_Z) + B
\end{equation}
and $p(\supp(B))$ coincides with the $f$-exceptional locus.
Therefore, we have an isomorphism $p^*\mathcal{E}_X \cong q^*\mathcal{E}_Z$ outside the support of $B$.

\begin{claim}
\label{claim-locallyfree}
The canonical extension sheaf $\mathcal{E}_Z$ is projectively flat on $Z_{\reg}$.
\end{claim}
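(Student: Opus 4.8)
The plan is to recast the Miyaoka--Yau equality \eqref{eq-MY-equality} as a Bogomolov--Gieseker equality for the canonical extension sheaf and to apply the equality case of Proposition \ref{prop-semistable}, with the admissible Hermitian--Yang--Mills metric supplied by Theorem \ref{lt HE}. Since \eqref{eq-canoincal-extension} is locally split, the sheaf $\mathcal{E}_Z$ has rank $n+1$ with $c_1(\mathcal{E}_Z)=c_1(-K_Z)$ and $c_2(\mathcal{E}_Z)=c_2(Z)$, so that $2(n+1)c_2(\mathcal{E}_Z)-nc_1(\mathcal{E}_Z)^2$ is exactly the Miyaoka--Yau discriminant of $Z$. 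The proof therefore splits into two tasks: transporting the equality \eqref{eq-MY-equality} from $X$ to the vanishing of this discriminant on $Z$ against $\langle c_1(-K_Z)^{n-2}\rangle$, and deducing projective flatness from that vanishing.

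For the first task I would work on the roof $W$ with $E_X:=p^*\mathcal{E}_X/\Tor$ and $E_Z:=q^*\mathcal{E}_Z/\Tor$. The relation \eqref{eq-contraction}, together with Lemma \ref{mov prod resol} applied to $q$ (using that $q^*c_1(-K_Z)$ is the pullback of a K\"ahler class and that $B$ is $q$-exceptional and effective), gives $\langle(p^*c_1(-K_X))^{n-2}\rangle=(q^*c_1(-K_Z))^{n-2}=\langle(q^*c_1(-K_Z))^{n-2}\rangle$. Hence both discriminants are computed as intersection numbers against one and the same class $(q^*c_1(-K_Z))^{n-2}$ pulled back from $Z$. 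Because $E_X\cong E_Z$ away from $\supp(B)$ and $B$ is $q$-exceptional, the difference of the corresponding Chern-class expressions is supported on the $q$-exceptional locus, so by the projection formula it pairs to zero against $(q^*c_1(-K_Z))^{n-2}$; this is the bookkeeping of \cite[Subsection~5.2]{IJZ25}. Consequently \eqref{eq-MY-equality} is equivalent to $\bigl(2(n+1)c_2(\mathcal{E}_Z)-nc_1(\mathcal{E}_Z)^2\bigr)\cdot\langle c_1(-K_Z)^{n-2}\rangle=0$.

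For the second task, recall that $Z$ is a K-stable klt Fano variety \cite{Xu23}, hence carries a singular K\"ahler--Einstein metric $\omega_Z$ with $\{\omega_Z\}=c_1(-K_Z)$, and that $\mathcal{E}_Z$ is $c_1(-K_Z)^{n-1}$-polystable \cite{DGP24}, say $\mathcal{E}_Z=\bigoplus_i\mathcal{E}_i$ with each $\mathcal{E}_i$ stable of the common slope. Applying Theorem \ref{lt HE} to each stable factor produces $\omega_Z$-admissible HYM metrics, and their direct sum is an $\omega_Z$-admissible HYM metric on $\mathcal{E}_Z$ with curvature $F_\nabla$. Since $-K_Z$ is ample, $\Amp(c_1(-K_Z))=Z$, so the Chern--Weil/Fatou estimate in the proof of Proposition \ref{prop-semistable} applies to this connection and yields
\begin{equation*}
0=\bigl(2(n+1)c_2(\mathcal{E}_Z)-nc_1(\mathcal{E}_Z)^2\bigr)\cdot\langle c_1(-K_Z)^{n-2}\rangle\ \ge\ c\!\int_{Z_{\reg}}|F^{0}_{\nabla}|^{2}\,\omega_Z^{n}\ \ge\ 0,
\end{equation*}
where $F^{0}_{\nabla}$ denotes the trace-free part of $F_\nabla$ and $c>0$. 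Therefore $F^{0}_{\nabla}\equiv 0$ on $Z_{\reg}$, i.e. $F_\nabla=\tfrac{1}{n+1}(\mathrm{tr}\,F_\nabla)\,\mathrm{Id}$, which is precisely the projective flatness of $\mathcal{E}_Z$ on $Z_{\reg}$. Note that the polystable case needs no separate analysis of the individual summands: it is the vanishing of the \emph{total} trace-free curvature of the direct-sum connection that directly encodes projective flatness of $\mathcal{E}_Z$.

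The step I expect to be the main obstacle is the transport in the first task, specifically controlling the reflexive-pullback discrepancy between $E_X$ and $E_Z$ along $B$. The two bundles agree only off the $q$-exceptional divisor $\supp(B)$, and one must verify that the resulting difference in the $c_2$- and $c_1^2$-terms is genuinely $q$-exceptional, so that it dies against the pulled-back class $(q^*c_1(-K_Z))^{n-2}$; this is exactly where the positive-product formalism and the computation of \cite[Subsection~5.2]{IJZ25} are essential. The analytic input is comparatively routine here, being imported from Theorem \ref{lt HE} and Proposition \ref{prop-semistable}, once one observes that the uniform Sobolev and mean-value inequalities of Theorem \ref{sob ineq} hold for the K\"ahler--Einstein metric $\omega_Z$ on the Fano variety $Z$.
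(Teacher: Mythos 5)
Your overall architecture differs from the paper's, and the difference exposes a genuine gap in your first task. The paper never transports the Chern-class equality from $\mathcal{E}_X$ to $\mathcal{E}_Z$. Instead it applies Proposition~\ref{prop-semistable} \emph{upstairs on $W$} to the single sheaf $E_X:=p^*\mathcal{E}_X/\Tor$: the Bogomolov--Gieseker equality for $E_X$ against $q^*c_1(-K_Z)^{n-2}$ follows purely from the resolution-independence of the intersection product \eqref{def of int prod} together with Lemma~\ref{mov prod resol} applied to \eqref{eq-contraction} (both statements concern one and the same sheaf $\mathcal{E}_X$), and the polystability of $\mathcal{E}_X$ comes from \cite{Jin25}/\cite{IJZ25} since $X$ is K-stable. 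This yields projective flatness of $E_X$ on $\Amp(q^*(-K_Z))=W\setminus\supp(B)$; the isomorphism $E_X\cong q^*\mathcal{E}_Z$ off $\supp(B)$ then transfers projective flatness, as a \emph{local} property, to $\mathcal{E}_Z$ on $Z_{\reg}\setminus q(B)$, and finally $\codim_{Z_{\reg}}q(B)\ge 2$ gives $\pi_1(Z_{\reg}\setminus q(B))\cong\pi_1(Z_{\reg})$, so the projectively flat structure extends over $q(B)$.

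The gap in your argument is the assertion that, because $E_X\cong E_Z:=q^*\mathcal{E}_Z/\Tor$ off the $q$-exceptional locus, the difference of their discriminants ``dies against'' $(q^*c_1(-K_Z))^{n-2}$ by the projection formula. That implication is false in general. The $K$-theory class $[E_X]-[E_Z]$ is supported on $\supp(B)$, so the degree-$4$ part of $\mathrm{ch}(E_X)-\mathrm{ch}(E_Z)$ is a pushforward of an $(n-2)$-cycle from $\supp(B)$; by the projection formula its pairing with $(q^*c_1(-K_Z))^{n-2}$ equals the pairing of its image cycle on $q(\supp(B))$ with $c_1(-K_Z)^{n-2}$. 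Since $q$-exceptionality only forces $\dim q(\supp(B))\le n-2$, this image can be a nonzero $(n-2)$-cycle (e.g.\ when a component of $B$ is contracted to an $(n-2)$-dimensional subvariety of $Z$), and then the pairing is nonzero: an elementary modification of a bundle along such a component already changes $\bigl(2rc_2-(r-1)c_1^2\bigr)\cdot(q^*c_1(-K_Z))^{n-2}$. So your claimed equivalence of \eqref{eq-MY-equality} with the vanishing of the discriminant of $\mathcal{E}_Z$ is unproved (it is only known a posteriori, after the theorem), and \cite[Subsection~5.2]{IJZ25} is cited in the paper only for \eqref{eq-contraction} and the isomorphism off $\supp(B)$, not for any such comparison. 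Your second task (singular KE metric on $Z$ via the solution of the YTD conjecture for K-stable Fano varieties, Theorem~\ref{lt HE} on the stable summands from \cite{DGP24}, and the Fatou/Chern--Weil argument) is plausible, though it imports strictly heavier inputs than the paper uses and still requires rerunning the approximation argument of Proposition~\ref{prop-semistable} with tame approximations of $\omega_Z$; but without a correct proof of the first task the proposal does not go through. The fix is exactly the paper's re-ordering: obtain projective flatness on $W\setminus\supp(B)$ first, and transport \emph{that}, not the Chern numbers.
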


\begin{proof}[Proof of Claim \ref{claim-locallyfree}]
The sheaf $\mathcal{E}_Z$ is locally free on $Z_{\reg}$ by considering (\ref{eq-canoincal-extension}). Thus it suffices to show projective flatness. By taking a modification, we may assume that $p^{*}\mathcal{E}_X/\tor$ is locally free. By \cite{Jin25} (see also \cite[Theorem~1.2]{IJZ25}), the canonical extension sheaf $\mathcal{E}_X$ is $\langle c_1(-K_X)^{n-1} \rangle$-polystable. Hence $p^{*}\mathcal{E}_X/\tor$ is $\langle p^{*}c_1(-K_X)^{n-1} \rangle$-polystable. By the definition of the intersection product (\ref{def of int prod}) (cf, \cite[Proposition~4.16]{IJZ25}), we obtain
\begin{align*}
& \bigl( (2n+1)c_2(p^{*}\mathcal{E}_X/\tor) - nc_1(p^{*}\mathcal{E}_X/\tor)^2 \bigr)\cdot \langle p^{*}c_1(-K_{X})^{n-2} \rangle \\
&\underalign{\text{\cite{IJZ25}}}{=} \bigl(2(n+1)c_2(\mathcal{E}_{X}) - nc_1(\mathcal{E}_{X})^2 \bigr)\cdot \langle c_1(-K_{X})^{n-2} \rangle \underalign{\eqref{eq-MY-equality}}{=} 0.
\end{align*}
From \eqref{eq-contraction} and Lemma \ref{mov prod resol}, for each $k \in \mathbb{N}$ we have $\langle p^{*}c_1(-K_X)^{k} \rangle = q^{*}c_1(-K_Z)^{k}$. Thus $p^{*}\mathcal{E}_X/\tor$ is $q^{*}c_1(-K_Z)^{n-1}$-polystable and
$$
\bigl(2(n+1)c_2(p^{*}\mathcal{E}_X/\tor) - nc_1(p^{*}\mathcal{E}_X/\tor)^2 \bigr)\cdot q^{*}c_1(-K_Z)^{n-2} = 0.
$$
Hence, by Proposition~\ref{prop-semistable}, $p^{*}\mathcal{E}_X/\tor$ is projectively flat on $\Amp(-q^{*}K_Z)$. Since $\Amp(-q^{*}K_Z) = W \setminus \supp(B)$ and $p^{*}\mathcal{E}_X/\Tor \cong q^*\mathcal{E}_Z$ outside $\supp(B)$, it follows that $q^*\mathcal{E}_Z$ is projectively flat on $W \setminus \supp(B)$.

Since $q^{-1}(Z_{\reg}) \setminus \supp(B) \cong Z_{\reg} \setminus q(\supp(B))$, we conclude that $\mathcal{E}_Z$ is projectively flat on $Z_{\reg} \setminus q(B)$. We have $\codim_{Z_{\reg}} q(B) \ge 2$ since $B$ is $q$-exceptional, which implies
$$
\pi_{1}(Z_{\reg} \setminus q(B)) \cong \pi_{1}(Z_{\reg}).
$$
Thus $\mathcal{E}_Z$ is projectively flat on $Z_{\reg}$.
\end{proof}

Since $Z$ has at most klt singularities, by \cite[Theorem 1.14]{GKP16} (see also \cite[Subsection 2.5]{GKP22}), there exists a finite quasi-étale Galois cover $\nu \colon \widehat{Z} \to Z$ such that $\widehat{Z}$ is maximal quasi-étale. In particular, $-K_{\widehat{Z}} = \nu^{*}(-K_{Z})$ (refer to \cite[\S 5, (5.11)]{GKP22}), so $-K_{\widehat{Z}}$ is ample. 

We will show that $\widehat{Z} \cong \C\mathbb{P}^{n}$.
Let $\widehat{Z}^{\circ} := \nu^{-1}(Z_{\reg})$. Then $\mathcal{E}_{\widehat{Z}}|_{\widehat{Z}^{\circ}}$ is also projectively flat since $\nu \colon \widehat{Z}^{\circ} \to Z_{\reg}$ is étale by purity of the branch locus (refer to \cite[Remark 1.4]{GKP16}), and we have $\mathcal{E}_{\widehat{Z}}|_{\widehat{Z}^{\circ}} \cong \pi^{*}(\mathcal{E}_{Z}|_{Z_{\reg}})$ by \cite[Remark~4.3]{GKP22}. Hence by Definition~\ref{defn-MY-1}, the reflexive sheaf $\End(\mathcal{E}_{\widehat{Z}})$ is flat and locally free on $\widehat{Z}^{\circ}$. As $\codim(\widehat{Z} \setminus \widehat{Z}^{\circ}) \ge 2$, we have $\pi_1(\widehat{Z}_{\reg}) \cong \pi_1(\widehat{Z}^{\circ})$, so $\End(\mathcal{E}_{\widehat{Z}})$ is also flat locally free on $\widehat{Z}_{\reg}$. Since $\widehat{Z}$ is maximally quasi-étale, \cite[Theorem~1.14]{GKP16} implies that $\End(\mathcal{E}_{\widehat{Z}})$ is flat locally free on $\widehat{Z}$.

Consider the following locally split exact sequence:
\begin{equation}\label{can ext of Z}
0 \to \mathcal{O}_{\widehat{Z}} \to \mathcal{E}_{\widehat{Z}} \to \mathcal{T}_{\widehat{Z}} \to 0.
\end{equation}
Since (\ref{can ext of Z}) is locally splittable by the construction \cite[Construction 4.1]{GKP22}, we have that $\mathcal{T}_{\widehat{Z}}$ is a direct summand of $\End(\mathcal{E}_{\widehat{Z}})$. Since $\End(\mathcal{E}_{\widehat{Z}})$ is locally free, so is $\mathcal{T}_{\widehat{Z}}$. Thus, by \cite[Theorem 6.1]{GKKP11}, $\widehat{Z}$ is smooth.
Since $\mathcal{E}_{\widehat{Z}}$ is projectively flat and $\widehat{Z}$ is Fano, there exists a line bundle $L$ with 
$\mathcal{E}_{\widehat{Z}} \cong L^{\oplus (n+1)}$.
(Note that Fano manifolds are simply connected and thus the representation $\pi_1(\widehat{Z})\to PGL(n+1,\mathbb{C})$ corresponding to $\mathcal{E}_{\widehat{Z}}$ is the trivial representations.) Moreover, since $\det \mathcal{E}_{\widehat{Z}} \cong \mathcal{O}_{\widehat{Z}}(-K_{\widehat{Z}})$, we obtain
$$
\mathcal{O}_{\widehat{Z}}(-K_{\widehat{Z}}) \cong L^{\otimes (n+1)}.
$$
Since $-K_{\widehat{Z}}$ is ample, $L$ is also ample.
Therefore, by \cite[Corollary of Theorem 1.1]{KO73}, we conclude that $\widehat{Z} \cong \C\mathbb{P}^{n}$.
\end{proof}

\section{Example of a semistable vector bundle for a nef line bundle on a complex surface}
In this section, we give an example of semistable vector bundle on a complex surface which is semistable with respect to a nef and big line bundle, but it is not semistable with respect to any ample line bundles. 
Let $E$ be a holomorphic vector bundle on $\mathbb{C}P^2$ which is given by
\begin{equation}\label{def of E}
0\to\mathcal{O}_{\mathbb{C}P^2}\to E\to\mathcal{J}_{\{x_1,x_2\}}\otimes\mathcal{O}_{\mathbb{C}P^2}(2)\to0.
\end{equation}
By \cite[Chapter 2 \S 1.3]{OSS80}, above $E$ exists. Furthermore, $E$ is $c_1(\mathcal{O}(1))$-slope semistable, but not stable \cite[Chapter 2, \S 1.3, Example 1]{OSS80}.
Let $\pi:X\to \mathbb{C}P^2$ be a blow up at a point $p\in \mathbb{C}P^2$ and $C$ be the $\pi$-exceptional divisor. 
\begin{lem}
    For any ample line bundle $H$ on $X$, there are positive integer $a>0$ and $b>0$ such that $H=\mathcal{O}(a)\otimes\mathcal{O}(-bC)$ and $a>b$.
\end{lem}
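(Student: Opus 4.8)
The plan is to read off the three inequalities from the Nakai--Moishezon ampleness criterion on the smooth projective surface $X$. First I would fix coordinates on the Néron--Severi lattice. Writing $L := \pi^{*}\mathcal{O}_{\mathbb{C}P^2}(1)$, the blow-up has $\mathrm{Pic}(X) = \mathbb{Z}L \oplus \mathbb{Z}\,\mathcal{O}(C)$, so every line bundle is uniquely of the form $H = \mathcal{O}(a)\otimes\mathcal{O}(-bC)$ with $a,b \in \mathbb{Z}$, where $\mathcal{O}(a) := \pi^{*}\mathcal{O}_{\mathbb{C}P^2}(a) = L^{\otimes a}$. The intersection numbers are the standard ones for the blow-up of a surface at one point: $L^{2} = 1$, $L\cdot C = 0$, and $C^{2} = -1$. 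With this in hand the task reduces to producing the two inequalities $b > 0$ and $a > b$, since together they force $a > b > 0$ and hence $a > 0$ as well.

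Next I would test $H$ against two well-chosen irreducible curves, using that ampleness implies $H\cdot C' > 0$ for every irreducible curve $C'$ by Nakai--Moishezon. Taking $C' = C$, the exceptional curve, gives $H\cdot C = (aL - bC)\cdot C = -bC^{2} = b$, so $b > 0$. Taking $C'$ to be the strict transform $\widetilde{\ell}$ of a line through the center $p$ of the blow-up, whose class is $\widetilde{\ell} = L - C$, gives $H\cdot\widetilde{\ell} = (aL - bC)\cdot(L - C) = a - b$, so $a > b$. This already delivers all three assertions.

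I do not expect a genuine obstacle here; the only step needing care is the identification of the class of the test curve $\widetilde{\ell}$. The strict transform of a general line through $p$ has class $L - C$ because the line passes through $p$ with multiplicity one, so its total transform splits as $\pi^{*}\ell = \widetilde{\ell} + C$. The remaining Nakai--Moishezon condition $H^{2} = a^{2} - b^{2} > 0$ is then automatic from $a > b > 0$, so the two curve computations suffice.
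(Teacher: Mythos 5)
Your proof is correct, but it follows a slightly different route than the paper's. Both arguments use the decomposition $\mathrm{Pic}(X)=\pi^{*}\mathrm{Pic}(\mathbb{C}P^2)\oplus\mathbb{Z}[C]$ and both extract $b>0$ from $H\cdot C>0$; the difference lies in how the other two inequalities are obtained. The paper first proves $a>0$ by pushing forward: $\pi_{*}H$ is big on $\mathbb{C}P^2$, and on $\mathbb{C}P^2$ the big cone is $\mathbb{Z}_{>0}\mathcal{O}(1)$, forcing $\pi_{*}H=\mathcal{O}(a)$ with $a>0$; it then gets $a>b$ from the self-intersection $0<H^{2}=a^{2}-b^{2}$ (which needs $a>0$ already in hand). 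You instead test $H$ against a second curve, the strict transform $\widetilde{\ell}=L-C$ of a line through the center, getting $a-b=H\cdot\widetilde{\ell}>0$ directly, and then $a>0$ falls out of $a>b>0$ for free. Your version is arguably more self-contained: it needs only the elementary fact that an ample divisor meets every irreducible curve positively, and avoids both the push-forward step and the quadratic inequality. (One small terminological nit: the implication ``ample $\Rightarrow$ positive degree on every curve'' is the easy direction, not the Nakai--Moishezon criterion itself, which is the converse; this does not affect the argument.) The identification $\pi^{*}\ell=\widetilde{\ell}+C$ that you flag as the delicate point is indeed the standard one and is correct.
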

\begin{proof}
    Since $H^2(\mathbb{C}P^2,\mathbb{Z})=\mathbb{Z}\mathcal{O}(1)$, the ample cone of $\mathbb{C}P^2$ coincides with the big cone and it is described as  $\Amp(\mathbb{C}P^2)=\mathbb{Z}_{>0}\mathcal{O}(1)$. Let $H$ be an ample line bundle of $X$. Since $\pi_*H$ is big, there is a constant $a>0$ such that $\pi_*H=\mathcal{O}(a)$. Since ${\rm{Pic}}(X)=\pi^*{\rm{Pic}}(\mathbb{C}P^2)\oplus\mathbb{Z}[C]$, there is a constant $b'\in\mathbb{Z}$ such that $H=\pi^*\mathcal{O}(a)\otimes\mathcal{O}(b'C)$. If $b'>0$, then the intersection $(H\cdot C)=-b'<0$, which contradicts to ampleness of $H$. Thus $b'=-b<0$. Since $0<(H^2)=a^2-b^2$, we obtain that $a>b$.
\end{proof}
\begin{prop}\label{eg nef}
    Let us define a holomorphic vector bundle $F$ on $X$ by $F:=\pi^*E\oplus(\mathcal{O}(C)\otimes\pi^*\mathcal{O}(1))$. Then the following holds:\\
    $(1)$ $F$ is $\pi^*\mathcal{O}(1)$-semistable, but not stable.\\
    $(2)$ For any ample line bundle $H$ on $X$, $F$ is not $H$-semistable.\\
    $(3)$ $F$ satisfies the Bogomolov-Gieseker inequality.
\end{prop}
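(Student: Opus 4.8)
The plan is to reduce all three assertions to explicit intersection‑number computations on the surface $X$, using the numerical basis $L := c_1(\pi^*\mathcal{O}(1))$ and $C$, which satisfy $L^2 = 1$, $L\cdot C = 0$, and $C^2 = -1$. Since $L=\pi^*\mathcal{O}(1)$ is nef, its positive product is itself, so throughout $\mu_L(\mathcal{G}) = c_1(\mathcal{G})\cdot L/\rk\mathcal{G}$, and likewise for the ample classes in part $(2)$.

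First I would record the Chern classes. From the defining sequence \eqref{def of E} and the Whitney formula, $c_1(E) = 2c_1(\mathcal{O}(1))$ and $c_2(E) = 2$ on $\mathbb{C}P^2$, the length‑$2$ subscheme $\{x_1,x_2\}$ contributing $2$ to $c_2$. Writing $M := \mathcal{O}(C)\otimes\pi^*\mathcal{O}(1)$ with $c_1(M)=L+C$, the Whitney formula for the direct sum $F=\pi^*E\oplus M$ gives $c_1(F)=3L+C$ and $c_2(F) = c_2(\pi^*E)+c_1(\pi^*E)\cdot c_1(M) = 2 + 2L\cdot(L+C) = 4$. These two outputs feed all three parts.

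For part $(1)$, note $L$ is nef and big ($L^2=1>0$), and because $C\cdot L = 0$ the $C$‑component of any first Chern class is invisible to $\mu_L$. I compute $\mu_L(F) = (3L+C)\cdot L/3 = 1$, with both summands of slope exactly $1$: $\mu_L(\pi^*E)=2L\cdot L/2=1$ and $\mu_L(M)=(L+C)\cdot L=1$. I would then show $\pi^*E$ is $L$‑semistable: for a rank‑$1$ subsheaf $\mathcal{F}\subset\pi^*E$ its saturation is a line bundle $\mathcal{O}_X(\pi^*D+kC)$; restricting the inclusion to $X\setminus C\cong\mathbb{C}P^2\setminus\{p\}$ and extending across the point yields $\mathcal{O}_{\mathbb{C}P^2}(D)\hookrightarrow E$, so semistability of $E$ forces $D\cdot\mathcal{O}(1)\le 1$, which since $C\cdot L=0$ is exactly $\mu_L(\mathcal{F})\le 1$. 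A direct sum of $L$‑semistable sheaves of equal slope is $L$‑semistable, so $F$ is $L$‑semistable; and since $M$ is a subsheaf with $\mu_L(M)=\mu_L(F)$, it is not stable.

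For part $(2)$, by the preceding Lemma every ample $H$ equals $aL-bC$ with $a>b>0$. I compute $\mu_H(F) = (3L+C)\cdot(aL-bC)/3 = (3a+b)/3 = a+b/3$, whereas the summand satisfies $\mu_H(M)=(L+C)\cdot(aL-bC)=a+b$, the extra $-bC^2=b>0$ being what lifts it. As $b>0$ we get $\mu_H(M)>\mu_H(F)$, so $M$ destabilizes $F$ for every ample $H$. For part $(3)$ the Bogomolov–Gieseker discriminant is $2\cdot 3\cdot c_2(F) - (3-1)c_1(F)^2 = 24 - 2(3L+C)^2 = 24 - 2\cdot 8 = 8 \ge 0$. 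The only step beyond bookkeeping is the $L$‑semistability of $\pi^*E$, i.e.\ that the blow‑down preserves semistability with respect to the merely nef and big pulled‑back polarization; the crux is extending the destabilizing subsheaf across the blown‑up point, which works precisely because that point has codimension $2$ and $C\cdot L=0$, while everything else is a finite intersection‑number calculation.
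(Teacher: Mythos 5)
Your proposal is correct, and for parts (2) and (3) it coincides with the paper's proof: the same slope computation showing $\mu_H(\mathcal{O}(C)\otimes\pi^*\mathcal{O}(1))=a+b>a+b/3=\mu_H(F)$, so the line-bundle summand destabilizes for every ample $H$, and the same evaluation $2\cdot 3\cdot c_2(F)-2\,c_1(F)^2=24-16=8>0$. The genuine difference is in part (1). The paper argues downstairs on $\mathbb{C}P^2$: the reflexive pushforward of $F$ is $E\oplus\mathcal{O}_{\mathbb{C}P^2}(1)$, which is $\mathcal{O}(1)$-semistable by the direct-sum criterion for the \emph{ample} polarization (\cite[Ch.~5, Prop.~7.9]{Kob87}), and then semistability of $F$ with respect to $\pi^*\mathcal{O}(1)$ follows from the bimeromorphic invariance of semistability, \cite[Proposition 3.28]{IJZ25}. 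You argue upstairs on $X$: you prove directly that $\pi^*E$ is $L$-semistable by saturating a rank-one subsheaf to a line bundle $\pi^*\mathcal{O}(d)\otimes\mathcal{O}(kC)$, descending the inclusion to $\mathbb{C}P^2\setminus\{p\}$, extending it across the point (Hartogs, $E$ locally free), and invoking semistability of $E$; then you apply the direct-sum criterion on $X$ itself, now for the merely nef class $L$. This trades the citation of \cite[Proposition 3.28]{IJZ25} for a short self-contained descent argument, which is a real gain in elementarity. The one thing you should make explicit is that the two ingredients you use — the direct-sum lemma and the saturation inequality $\mu_L(\mathcal{F})\le\mu_L(\mathcal{F}^{\mathrm{sat}})$ — remain valid for a nef rather than ample polarization: both do hold, since $\deg_L$ is additive in short exact sequences and the first Chern class of a torsion quotient is effective, hence pairs nonnegatively with the nef class $L$. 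With that remark added, your part (1) is complete and the whole argument is sound.
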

\begin{proof}
    $(1)$ We can easily see that $\mu_{\mathcal{O}(1)}(E)=1=\mu_{\mathcal{O}(1)}(\mathcal{O}(1))$ and recall that both $E$ and $\mathcal{O}(1)$ are $\mathcal{O}(1)$-semistable. Thus $\pi_{[*]}E=E\oplus \mathcal{O}(1)$ is $\mathcal{O}(1)$-semistable (c.f. \cite{Kob87} Chapter 5, Proposition 7.9). Since semistability is bimeromorphic invariant by \cite[Proposition 3.28]{IJZ25}, $F$ is $\pi^*\mathcal{O}(1)$-semistable. And $F$ is not $\pi^*\mathcal{O}(1)$-stable since a subbundle $\pi^*E\subset F$ satisfies $\mu_{\pi^*\mathcal{O}(1)}(\pi^*E)=1=\mu_{\pi^*\mathcal{O}(1)}(F)$.\\
    $(2)$ Let $H$ be an ample line bundle on $X$. We normalize $H$ so that $H=\mathcal{O}(1)\otimes\mathcal{O}(-bC)$ with $b\in \mathbb{Q}_{>0}$. We know $0<b<1$. We will show that $F$ is not $H$-semistable. The $H$-slope of $F$ is computed as follows:
    \begin{align*}
    \mu_H(F)
    &=\frac{1}{3}\left(\deg_H(\pi^*E)+\deg_H(C)+\deg_H(\pi^*\mathcal{O}(1))\right)\\
    &=\frac{1}{3}(c_1(E)\cdot c_1(\mathcal{O}(1))+c_1(C)\cdot(-bC)+c_1(\mathcal{O}(1))^2)\\
    &=1+\frac{b}{3}.
    \end{align*}
    On the other hand, the $H$-slope of a rank 1 subbundle $\mathcal{O}(C)\otimes\pi^*\mathcal{O}(1)$ is as follows:
    $$
    \mu_H(\mathcal{O}(C)\otimes\pi^*\mathcal{O}(1))=c_1(C)\cdot (-bC)+c_1(\pi^*\mathcal{O}(1))^2=b+1.
    $$
    Therefore $\mathcal{O}(C)\otimes\pi^*\mathcal{O}(1)$ is a $H$-destabilizing subbundle of $F$.\\
    $(3)$ It is a consequence of Theorem \ref{BG ineq}. We can show by the following direct computation.
    By the definition of $F$, we have
    \begin{itemize}
    \item $c_1(F)=c_1(\pi^*E)+c_1(\pi^*\mathcal{O}(1))+c_1(C)$
    \item $c_2(F)=c_2(\pi^*E)+c_1(\pi^*E)\cdot (c_1(\pi^*\mathcal{O}(1)+c_1(C))$
    \end{itemize}
    By (\ref{def of E}), we have $c_1(\pi^*E)=c_1(\pi^*\mathcal{O}(1))$ and $c_2(E)=c_2(\mathcal{J}_{\{x_1,x_2\}})=2$ (cf. \cite[Chapter 1, \S 5, Example 1]{OSS80}). Thus we have $c_1(F)^2=8$ and $c_2(F)=4$. Then 
    $$
    2rc_2(F)-(r-1)c_1(F)^2=2\cdot3\cdot4-(3-1)\cdot8=24-16=8>0.
    $$
\end{proof}

\end{document}